\newtheorem{thm}{Theorem}[section]
\newtheorem{lem}[thm]{Lemma}
\newcommand{\la}{\langle}
\newcommand{\ra}{\rangle}
\newcommand{\ie}{{\it i.e.}}
\newcommand{\Om}{\Omega}
\newcommand{\ep}{\epsilon}
\newcommand{\RR}{\mathbb{R}}
\newcommand{\ZZ}{\mathbb{Z}}
\newcommand{\CC}{\mathbb{C}}
\newcommand{\vp}{\varphi}
\newcommand{\p}{\partial}
\newcommand{\Scal}{\mathcal{S}}
\newcommand{\Kcal}{\mathcal{K}}
\newcommand{\Dcal}{\mathcal{D}}
\newcommand{\de}{\delta}
\newcommand{\lam}{\lambda}
\newcommand{\KK}{\mathbb{K}}
\newcommand{\II}{\mathbb{I}}
\newcommand{\SSS}{\mathbb{S}}
\newcommand{\AAA}{\mathbb{A}}
\newcommand{\Hcal}{\mathcal{H}}
\newcommand{\eqnref}[1]{(\ref {#1})}
\newcommand{\ds}{\displaystyle}
\newcommand{\nm}{\noalign{\smallskip}}
\newcommand{\pd}[2]{\frac {\p #1}{\p #2}}
\newcommand{\beq}{\begin{equation}}
\newcommand{\eeq}{\end{equation}}
\newcommand{\Ker}{{\rm Ker}}
\newcommand{\Range}{{\rm Range}}
\newcommand{\diSe}{\dfrac{\p}{\p\nu_i}\Scal_{\Gamma_e}}
\newcommand{\deSi}{\dfrac{\p}{\p\nu_e}\Scal_{\Gamma_i}}
\newenvironment{proof}{
  \noindent{\it Proof.}\ }{\hspace*{\fill}
  \begin{math}\Box\end{math}\medskip}
\newcommand{\qed}{\hfill $\Box$ \medskip}
\begin{document}
\title{Spectral theory of a Neumann-Poincar\'e-type operator and analysis of
cloaking due to anomalous localized resonance\thanks{\footnotesize This work was
supported by the ERC Advanced Grant Project MULTIMOD--267184 and
NRF grants No. 2009-0090250, 2010-0004091, and 2010-0017532, and by the NSF
through grant DMS-0707978}}

\author{Habib Ammari\thanks{\footnotesize Department of Mathematics and Applications, Ecole Normale Sup\'erieure,
45 Rue d'Ulm, 75005 Paris, France (habib.ammari@ens.fr).} \and
Giulio Ciraolo\thanks{\footnotesize  Dipartimento di Matematica e
Informatica, Universit\`a di Palermo Via Archirafi 34, 90123, Palermo, Italy
  (g.ciraolo@math.unipa.it).} \and Hyeonbae
Kang\thanks{Department of Mathematics, Inha University, Incheon
402-751, Korea (hbkang@inha.ac.kr, hdlee@inha.ac.kr).}  \and
Hyundae Lee\footnotemark[4]  \and Graeme W.
Milton\thanks{\footnotesize Department of Mathematics, University
of Utah, Salt Lake City, UT 84112, USA (milton@math.utah.edu).}}

\maketitle

\begin{abstract}
The aim of this paper is to give a mathematical justification of
cloaking due to anomalous localized resonance (CALR).  We consider the
dielectric problem with a source term in a  structure with a
layer of plasmonic material. Using layer potentials and symmetrization
techniques, we give a necessary and sufficient condition on the fixed
source term for electromagnetic power dissipation to blow up as the
loss parameter of the plasmonic material goes to zero. This condition is written
in terms of  the Newtonian potential of the source term. In the case
of concentric disks, we make the condition even more
explicit. Using the condition, we are able to show that for any
source supported outside a critical radius CALR
does not take place, and for sources located
inside the critical radius satisfying certain conditions CALR does take place
as the loss parameter goes to zero.
\end{abstract}

\noindent {\footnotesize {\bf AMS subject classifications.} 35R30,
35B30}

\noindent {\footnotesize {\bf Key words.} anomalous localized
resonance, plasmonic materials, singular perturbation, non-self-adjoint
operator, symmetrization, quasi-static cloaking}

\section{Introduction}

In recent years much interest has been aroused by the possibility
of cloaking objects from interrogation by electromagnetic waves.
Many schemes are under active current investigation
\cite{glu,alu,leonhardt,pendry,miller,CCKSM_07,kohn1,GKLU,broaband,liu,GMO_09,LCZC_09,kohn2}. One
such scheme, which is the focus of our study, relies on resonant interaction to mask the
electromagnetic signature of the object to be cloaked
\cite{MN_PRSA_06,osa,bruno,MNM_07,NMET_08,MNMCJ_09,MNBM_09,bouchitte,NMB_11}. 

We consider the
dielectric problem with a source term $\alpha f$, proportional to $f$, which models the
quasi-static (zero-frequency) transverse magnetic regime. The
cloaking of the source is achieved in a region external to a plasmonic
structure. The plasmonic structure consists of a shell having
relative permittivity $-1+i \delta$ with $\delta$ modelling
losses.

The cloaking issue is directly linked to the existence of
anomalous localized resonance (ALR), which is tied to the fact
that an elliptic system of equations can exhibit localization
effects near the boundary of ellipticity. The plasmonic structure
exhibits ALR if, as the loss parameter $\delta$ goes to zero, the
magnitude of the quasi-static in-plane electric field diverges
throughout a specific region (with sharp boundary not defined by
any discontinuities in the relative permittivity), called the
anomalous resonance region, but converges to a smooth field
outside that region. The convergence to a smooth field outside
the region was shown in \cite{NMM_94}, where the first numerical evidence
for ALR was also presented. A proof of ALR for a dipolar source outside
a plasmonic annulus was given
in \cite{MNMP_PRSA_05}.

 Alexei Efros (2005 private communication to GWM)
made the key observation that for a fixed dipolar source within a critical distance
of the plasmonic structure  the total electrical power
absorbed would become infinite as $\delta\to 0$, which is unphysical. The anomalously
resonant fields interact with the source creating a sort of ``electromagnetic molasses'' against which the
source has to a huge amount of work to maintain its amplitude, in fact an infinite amount
of work in the limit $\delta\to 0$. Therefore
it makes sense to normalize the source term (by adjusting $\alpha$, letting it depend on $\delta$)
so the source supplies power at constant rate
independent of $\delta$. Then outside the region where ALR occurs the field tends to
zero as $\delta\to 0$: the source becomes cloaked. Cloaking also extends
to finite collections of polarizable dipoles (dipole sources whose strength is
proportional the field acting on them) within a critical radius
around a plasmonic annulus \cite{MN_PRSA_06,osa}, and to a sufficiently small
dielectric disk (with radius which goes to zero as $\delta\to 0$) lying
within this critical radius \cite{bouchitte}. However numerical evidence
suggests that a small dielectric disk with $\delta$ independent radius
is only partially cloaked in the limit $\delta\to 0$ \cite{bruno}. We also mention that
opposing sources on opposite sides of a planar superlens can be cloaked \cite{board} but this is
due to cancellation of fields, rather than anomalous resonance. 

To mathematically state the problem, let $\Om$ be a bounded domain
in $\RR^2$ and let $D$ be a domain whose closure is contained in
$\Om$. Throughout this paper, we assume that $\Omega$ and $D$ are
of class $ \mathcal{C}^{1,\mu}$ for some $0<\mu <1$. For a given loss parameter $\delta>0$, the permittivity
distribution in $\RR^2$ is given by
 \beq
 \ep_\delta  = \begin{cases}
 1 \quad & \mbox{in } \RR^2 \setminus \overline{\Om}, \\
 -1+ i \delta \quad & \mbox{in } \Om \setminus \overline{D}, \\
 1 \quad &\mbox{in } D.
 \end{cases}
 \eeq
We may consider the configuration as a core with permittivity 1 coated by the shell $\Om \setminus \overline{D}$ with permittivity $-1+i\delta$. For a given
function $f$ compactly supported in $\RR^2$ satisfying
 \beq\label{zeroint}
 \int_{\RR^2} f dx=0
 \eeq
(which physically is required by conservation of charge), we consider the following dielectric problem:
 \beq \label{basiceqn}
 \nabla \cdot \ep_\delta \nabla V_\delta =\alpha f \quad \mbox{in } \RR^2,
 \eeq
with the decay condition $V_\delta (x) \to 0$ as $|x| \to \infty$.

A fundamental problem is to identify those sources $f$ such that when $\alpha=1$
then first
 \beq\label{blowup1}
E_\delta := \int_{\Om \setminus \overline{D}} \delta |\nabla V_\delta|^2 dx \to \infty \quad\mbox{as } \delta \to 0.
 \eeq
and second $V_\delta$ remains bounded outside some radius $a$:
\beq\label{boundedness}
|V_\delta(x)|<C,~~~{\rm when}~~|x|>a
\eeq
for some constants $C$ and $a$ independent of $\delta$ (which necessitates
that the ball $B_a$ contains the entire region of  anomalous localized resonance).
The quantity $E_\delta$ is proportional to the electromagnetic power dissipated into
heat by the time harmonic electrical field
averaged over time. Hence \eqnref{blowup1} implies an infinite amount of energy dissipated
per unit time in the limit $\delta \to 0$ which is unphysical. If instead we choose $\alpha=1/\sqrt{E_\delta}$ then
the source $\alpha f$ will produce the same power independent of $\delta$
and the new associated solution $V_\delta$ (which is the previous solution  $V_\delta$ multiplied by $\alpha$)
will approach zero outside the radius $a$:
cloaking due to anomalous localized resonance (CALR) occurs. The conditions \eqnref{blowup1} and \eqnref{boundedness}
are sufficient to ensure CALR: a necessary and sufficient condition is that (with $\alpha=1$)
$V_\delta/\sqrt{E_\delta}$ goes to zero outside some radius as  $\delta \to 0$. We also consider a weaker blow-up of the energy dissipation, namely,
 \beq\label{weakerblowup}
 \limsup_{\delta \to 0} E_\delta = \infty.
 \eeq
We say that weak CALR takes place if \eqnref{weakerblowup} holds (in addition to \eqnref{boundedness}). Then the (renormalized) source $f/\sqrt{E_\delta}$ will be essentially invisible
at a infinite sequence of small values of $\delta$ tending to zero (but would be quite visible for values of $\delta$ interspersed between this sequence if CALR does not additionaly hold).

The aim of this paper is to develop a general method based on the potential theory to study cloaking due to anomalous resonance. Using layer potential techniques, we reduce the problem to a
singularly perturbed system of integral equations. The system is
non-self-adjoint. A symmetrization technique is introduced in order
to express the solution in terms of the eigenfunctions of a
self-adjoint compact operator.  The symmetrization technique is
based on a generalization of a Calder\'on identity to the system of integral equations
under consideration and a general theorem on symmetrization of non-selfadjoint operators obtained in a
 recent paper by Khavinson {\it et al} \cite{KPS}.

Using the technique developed in this paper, we are able to
provide a necessary and sufficient condition on the source term
under which the blowup \eqnref{blowup1} of the power dissipation
takes place. The condition is given in terms
of the Newtonian potential of the source, which is the solution for
the potential in the absence of the plasmonic structure.

In the case of an annulus ($D$ is the disk of radius
$r_i$ and $\Om$ is the concentric disk of radius $r_e$), it is known \cite{MN_PRSA_06}
that there exists a critical radius (the cloaking radius)
 \beq\label{ALRR}
 r_* = \sqrt{{r_e^3}{r_i}^{-1}}.
 \eeq
such that any finite collection of dipole sources located at fixed positions within
the annulus $B_{r_*}\setminus \overline{B}_e$ is cloaked. We show (see Theorem \ref{corollary1} below)
that if $f$ is an integrable function supported in $E \subset B_{r_*}\setminus \overline{B}_e$ satisfying \eqnref{zeroint} and the Newtonian potential of $f$ does not extend as a harmonic function in $B_{r_*}$, then weak CALR takes place. Moreover, we show that if the Fourier coefficients of the Newtonian potential of $f$ satisfy a mild gap condition,  then CALR takes place. Using this result, we are able to show that a quadrupole source inside the annulus $B_{r_*}\setminus \overline{B}_e$
would be cloaked, in agreement with the numerical results of \cite{osa}. Conversely we show that if the source function $f$ is supported
outside $B_{r_*}$ then \eqnref{blowup1} does not happen and no cloaking occurs.
 We stress that we assume $f$ does not depend on $\delta$:
the results of \cite{bruno} strongly suggest that there exist sequences of sources $f_\delta$ supported in $E \subset B_{r_*}\setminus \overline{B}_e$
with non-trivial Newtonian potentials outside $E$, such that the power dissipation does not blow up, and such that $V_\delta$ does
not go to zero outside $B_{r_*}$ as $\delta\to 0$.

This paper is organized as follows. In Section 2 we transform the
problem into a system of integral equations using layer
potentials. In Section 3, we develop a spectral theory for the
relevant integral operators and derive a necessary and sufficient
condition for CALR to take place. Section 4 treats the special
case of an annulus.

%%%%%%%%%%%%%%%%%%%%%%%%%%%%%%%%%%%%%%%%%%%%%
%
\section{Layer potential formulation}
%
%%%%%%%%%%%%%%%%%%%%%%%%%%%%%%%%%%%%%%%%%%%%%

Let $G$ be the fundamental solution to the Laplacian in two dimensions which is given by
$$ G(x) = \ds \frac{1}{2\pi} \ln |x|. $$
Let $\Gamma_i:= \p D$ and $\Gamma_e:= \p\Om$. For $\Gamma=\Gamma_i$ or $\Gamma_e$, we denote, respectively, the single
and double layer potentials of a function $\vp \in L^2(\Gamma)$
as $\Scal_\Gamma[\vp]$ and $\Dcal_\Gamma[\vp]$, where
\begin{align*}
\Scal_\Gamma[\vp] (x) &:= \int_{\Gamma} G(x-y) \vp (y) \, d
\sigma(y), \quad x \in \RR^2, \\
\nm \Dcal_\Gamma[\vp] (x) &: =\int_{\Gamma} \pd{}{\nu (y)}G (x-y)
\vp (y) \, d \sigma(y) \;, \quad x \in \RR^2 \setminus \Gamma.
\end{align*} Here, $\nu(y)$ is the outward unit normal  to $\Gamma$ at $y$.

We also define a boundary integral operator $\Kcal_\Gamma$ on $L^2(\Gamma)$ by
 $$
 \Kcal_\Gamma[\vp] (x) := \frac{1}{2\pi} \int_{\Gamma} \frac{\la
 y -x, \nu(y) \ra}{|x-y|^2} \vp (y)\,d\sigma(y),
 $$
and let
$\Kcal_\Gamma^*$ be the  $L^2$-adjoint of $\Kcal_\Gamma$. Hence,
the operator $\Kcal_\Gamma^*$ is given by $$ \Kcal_\Gamma^*[\vp]
(x) = \frac{1}{2\pi} \int_{\Gamma} \frac{\la
 x -y, \nu(x) \ra}{|x-y|^2} \vp (y)\,d\sigma(y), \quad \vp \in  L^2(\Gamma).
 $$
Here and throughout this paper, $\la \;, \;\ra$ denotes the scalar
product in $\RR^2$. The operators $\Kcal_\Gamma$ and
$\Kcal_\Gamma^*$ are sometimes called Neumann-Poincar\'e
operators. These operators are compact in $L^2(\Gamma)$ if
$\Gamma$ is $\mathcal{C}^{1, \alpha}$ for some $\alpha>0$.

The following notation will be used throughout this paper. For a
function $u$ defined on $\RR^2 \setminus \Gamma$, we denote
 $$
 u |_\pm(x) := \lim_{t \to 0^+}  u(x \pm t \nu(x)), \quad x \in \Gamma ,
 $$
and
 $$
 \pd{u}{\nu} \Big |_\pm(x) := \lim_{t \to 0^+} \la \nabla u(x \pm t \nu(x)), \nu(x) \ra\;, \quad x \in \Gamma,
 $$
if the limits exist.

The following jump formulas relate the traces of the double layer
potential and the normal derivative of the single layer potential
to the operators $\Kcal_\Gamma$ and $\Kcal_\Gamma^*$. We have
 \begin{align}
 (\Dcal_\Gamma [\vp])  |_{\pm} (x) & = \left( \mp \frac{1}{2} I
 + \Kcal_\Gamma \right) [\vp] (x), \quad  x \in \Gamma, \label{doublejump} \\
 \pd{}{\nu} \Scal_\Gamma [\vp] \Big |_\pm (x) & = \left( \pm
 \frac{1}{2} I + \Kcal_\Gamma^* \right) [\vp] (x), \quad
 x \in \Gamma. \label{singlejump}
 \end{align}
See, for example, \cite{book2, Folland76}.

Let $F$ be the Newtonian potential of $f$, {\it i.e.},
 \beq \label{newton}
 F(x)= \int_{\RR^2} G(x-y) f(y) dy, \quad x \in \RR^2.
 \eeq
Then $F$ satisfies $\Delta F=f$ in $\RR^2$, and the solution $V_\delta$ to \eqnref{basiceqn} may be
represented as
 \beq\label{vdelta}
 V_\delta(x) = F(x) + \Scal_{\Gamma_i}[\vp_i](x) + \Scal_{\Gamma_e}[\vp_e](x)
 \eeq
for some functions $\vp_i \in L^2_0(\Gamma_i)$ and $\vp_e \in
L^2_0(\Gamma_e)$ ($L^2_0$ is the collection of all square integrable functions with the integral zero). The transmission conditions along the interfaces $\Gamma_e$ and $\Gamma_i$ satisfied by
$V_\delta$ read
 \begin{align*}
 (-1+i\delta) \pd{V_\delta}{\nu} \Big|_{+} =  \pd{V_\delta}{\nu}
 \Big|_{-} \quad \mbox{on } \Gamma_i \\
 \pd{V_\delta}{\nu} \Big|_{+} = (-1+i\delta) \pd{V_\delta}{\nu} \Big|_{-} \quad \mbox{on }
 \Gamma_e.
 \end{align*}
Hence the pair of potentials $(\vp_i, \vp_e)$  is the solution to
the following system of integral equations:
 $$
 \begin{cases}
 \ds (-1+i\delta) \pd{\Scal_{\Gamma_i}[\vp_i]}{\nu_i} \Big|_{+} -
  \pd{\Scal_{\Gamma_i}[\vp_i]}{\nu_i} \Big|_{-} +
 (-2+i\delta) \pd{\Scal_{\Gamma_e}[\vp_e]}{\nu_i}  = (2-i\delta) \pd{F}{\nu_i} \quad \mbox{on } \Gamma_i, \\
 \nm
 \ds (2-i\delta) \pd{\Scal_{\Gamma_i}[\vp_i]}{\nu_e} +
 \pd{\Scal_{\Gamma_e}[\vp_e]}{\nu_e} \Big|_{+} -
 (-1+i\delta) \pd{\Scal_{\Gamma_e}[\vp_e]}{\nu_e} \Big|_{-}
 = (-2+i\delta) \pd{F}{\nu_e} \quad \mbox{on } \Gamma_e.
 \end{cases}
 $$
Note that we have used the notation $\nu_i$ and $\nu_e$ to indicate
the outward normal on $\Gamma_i$ and $\Gamma_e$, respectively.
Using the jump formula \eqnref{singlejump} for the normal derivative of the single
layer potentials, the above equations can be rewritten as
 \beq \label{matrixeqn}
 \begin{bmatrix}
 \ds -z_\delta I +
 \Kcal_{\Gamma_i}^* & \ds \pd{}{\nu_i} \Scal_{\Gamma_e} \\
 \nm \ds \pd{}{\nu_e} \Scal_{\Gamma_i} & \ds z_\delta I +
 \Kcal_{\Gamma_e}^* \end{bmatrix}
 \begin{bmatrix}
 \vp_i \\ \vp_e
 \end{bmatrix}  = - \begin{bmatrix}
 \ds \pd{F}{\nu_i} \\ \nm \ds \pd{F}{\nu_e}
 \end{bmatrix}
 \eeq
on $L^2_0(\Gamma_i) \times L^2_0(\Gamma_e)$, where we set
\begin{equation}\label{lam e mu}
z_\delta  = \frac{i\de}{2(2-i\de)}.
\end{equation}
Note that the operator in
\eqnref{matrixeqn} can be viewed as a compact perturbation of the
operator
 \beq \label{mainoperator}
 R_\delta:= \begin{bmatrix}
 \ds -z_\delta I +
 \Kcal_{\Gamma_i}^* & 0 \\
 \nm 0 & \ds z_\delta I +
 \Kcal_{\Gamma_e}^* \end{bmatrix} \, .
 \eeq

We now recall Kellogg's result in \cite{kellog} on the spectrums
of $\Kcal_{\Gamma_i}^*$ and $\Kcal_{\Gamma_e}^*$. The eigenvalues
of $\Kcal_{\Gamma_i}^*$ and $\Kcal_{\Gamma_e}^*$
lie in the interval $]-\frac{1}{2}, \frac{1}{2}]$. Observe
that $z_\delta \to 0$
as $\delta \to 0$ and that there are sequences of eigenvalues of $\Kcal_{\Gamma_i}^*$ and
$\Kcal_{\Gamma_e}^*$ approaching $0$ since $\Kcal_{\Gamma_i}^*$ and $\Kcal_{\Gamma_e}^*$ are compact. So $0$ is the essential
singularity of the operator valued meromorphic function
 $$
 \lambda \in \CC \mapsto (\lambda I + \Kcal_{\Gamma_e}^*)^{-1}.
 $$
This causes a serious difficulty in dealing with \eqnref{matrixeqn}. We emphasize that
$\Kcal_{\Gamma_e}^*$ is not self-adjoint in general.
In fact, $\Kcal_{\Gamma_e}^*$ is self-adjoint only when $\Gamma_e$
is a circle or a sphere (see \cite{Lim_IJM_01}).

Let $\Hcal = L^2(\Gamma_i) \times L^2(\Gamma_e)$. We write
\eqref{matrixeqn} in a slightly different form. We first apply the
operator
\begin{equation*}
\begin{bmatrix} -I & 0 \\ 0 & I \end{bmatrix} :\ \Hcal \to \Hcal
\end{equation*}
to \eqref{matrixeqn}. Then the equation becomes
\begin{equation}\label{matrixeq3}
\begin{bmatrix}
 \ds z_\delta  I - \Kcal_{\Gamma_i}^* & & \ds - \pd{}{\nu_i} \Scal_{\Gamma_e} \\
 \nm \ds \pd{}{\nu_e} \Scal_{\Gamma_i} & & z_\delta  I + \ds \Kcal_{\Gamma_e}^*
 \end{bmatrix}  \begin{bmatrix} \ds \vp_i \\ \vp_e \end{bmatrix} = \begin{bmatrix} \ds \frac{\p F}{\p \nu_i} \\ \nm \ds - \frac{\p F}{\p \nu_e} \end{bmatrix}.
\end{equation}
Let the Neumann-Poincar\'e-type operator $\KK^* : \Hcal \to \Hcal$ be
defined by
\begin{equation} \label{eq:K*}
 \KK^*:= \begin{bmatrix}
 \ds -\Kcal_{\Gamma_i}^* & \ds -\pd{}{\nu_i} \Scal_{\Gamma_e} \\
 \nm \ds \pd{}{\nu_e} \Scal_{\Gamma_i} & \ds \Kcal_{\Gamma_e}^*
 \end{bmatrix},
\end{equation}
and let
\begin{equation} \label{defg}
\Phi:=\begin{bmatrix} \ds \vp_i \\ \vp_e \end{bmatrix} , \quad
g:= \begin{bmatrix} \ds \frac{\p F}{\p \nu_i} \\ \nm \ds -
\frac{\p F}{\p \nu_e} \end{bmatrix}.
\end{equation}
Then, \eqref{matrixeq3} can be rewritten in the form
\begin{equation}\label{matrixeq4 ep=1}
(z_\delta  \II + \KK^* ) \Phi = g,
\end{equation}
where $\II$ is given by
\begin{equation}\label{II}
\II = \begin{bmatrix}
 \ds  I  & 0 \\
 0  &  I
 \end{bmatrix} .
\end{equation}

%%%%%%%%%%%%%%%%%%%%%%%%%%%%%%%%%%%%%%%%%%%%%%%
\section{Properties of $\KK^*$}
%%%%%%%%%%%%%%%%%%%%%%%%%%%%%%%%%%%%%%%%%%%%%%%

In the following we provide some properties of $\KK^*$. In
particular, we compute the adjoint operator $\KK$ of $\KK^*$,
study the spectrum of $\KK^*$, and show that $\KK^*$ is
symmetrizable on the space $\Hcal = L^2(\Gamma_i) \times L^2(\Gamma_e)$.

%%%%%%%%%%%%%%%%%%%%%%%%%%%%%%%%%%%%%%%%
\subsection{Adjoint operator of $\KK^*$}
%%%%%%%%%%%%%%%%%%%%%%%%%%%%%%%%%%%%%%%%
We first compute the adjoint of $\KK^*$. Denote by $\langle ,
\rangle_{L^2(\Gamma)} $ the Hermitian product on $L^2(\Gamma)$ for
$\Gamma=\Gamma_i$ or $\Gamma_e$. It is easy to see that
\begin{equation} \label{adjoint diSe}
\langle \pd{}{\nu_i} \Scal_{\Gamma_e}[\vp_e], \psi_i
\rangle_{L^2(\Gamma_i)} = \langle \vp_e, \Dcal_{\Gamma_i}[\psi_i]
\rangle_{L^2(\Gamma_e)},
\end{equation}
and
\begin{equation} \label{adjoint deSi}
\langle \pd{}{\nu_e} \Scal_{\Gamma_i} [\vp_i], \psi_e
\rangle_{L^2(\Gamma_e)} = \langle \vp_i, \Dcal_{\Gamma_e}[\psi_e]
\rangle_{L^2(\Gamma_i)}.
\end{equation}
Thus the $L^2$-adjoint of $\KK^*$, $\KK$, is given by
\begin{equation} \label{eq:A*}
 \KK = \begin{bmatrix}
 \ds - \Kcal_{\Gamma_i} & \ds \Dcal_{\Gamma_e} \\
 \nm \ds - \Dcal_{\Gamma_i} & \ds \Kcal_{\Gamma_e}
 \end{bmatrix}.
\end{equation}
We emphasize that the operators $\Dcal_{\Gamma_e}$ and $\Dcal_{\Gamma_i}$ in the off-diagonal entries are those from $L^2(\Gamma_e)$ into $L^2(\Gamma_i)$, and from $L^2(\Gamma_i)$ into $L^2(\Gamma_e)$, respectively.

%%%%%%%%%%%%%%%%%%%%%%%%%%%%%%%%%%%%%%%%
\subsection{Spectrum of $\KK^*$}
%%%%%%%%%%%%%%%%%%%%%%%%%%%%%%%%%%%%%%%%

We now look into the spectrum of $\KK^*$. We have the following
proposition which is a generalization of Kellogg's result in
\cite{kellog} on the spectrum of the operator $\Kcal^*_{\Gamma}$ on $L^2(\Gamma)$.

\begin{lem} \label{prop eigenvalues JA}
The spectrum of $\KK^*$ lies in the interval $[-1/2,1/2]$.
\end{lem}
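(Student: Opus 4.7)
My plan is to imitate the classical Kellogg argument for a single Neumann--Poincar\'e operator and extend it to the two-interface setting via an energy identity for an associated layer potential.

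First I would note that $\KK^*$ is a compact operator on $\Hcal$: under the $\mathcal{C}^{1,\mu}$ hypothesis the diagonal entries $\pm\Kcal^*_{\Gamma_j}$ are compact on $L^2(\Gamma_j)$, and the off-diagonal entries $\partial_{\nu_i}\Scal_{\Gamma_e}$ and $\partial_{\nu_e}\Scal_{\Gamma_i}$ are integral operators with smooth kernels because $\Gamma_i$ and $\Gamma_e$ are disjoint, hence Hilbert--Schmidt. Consequently the spectrum of $\KK^*$ consists of $\{0\}$ together with at most a sequence of eigenvalues accumulating only at $0$, and it suffices to show that every nonzero eigenvalue is real and lies in $[-1/2,1/2]$.

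Given an eigenpair $(\lambda,\Phi=(\vp_i,\vp_e)^T)$ with $\lambda\neq 0$, I would form the single layer superposition $u(x):=\Scal_{\Gamma_i}[\vp_i](x)+\Scal_{\Gamma_e}[\vp_e](x)$, which is harmonic on $\RR^2\setminus(\Gamma_i\cup\Gamma_e)$. Combining the jump formula \eqnref{singlejump} with the componentwise eigenvalue equations $-\Kcal^*_{\Gamma_i}\vp_i-\partial_{\nu_i}\Scal_{\Gamma_e}\vp_e=\lambda\vp_i$ and $\partial_{\nu_e}\Scal_{\Gamma_i}\vp_i+\Kcal^*_{\Gamma_e}\vp_e=\lambda\vp_e$ produces the clean transmission data
\[
\pd{u}{\nu_i}\Big|_\pm = \big(\pm\tfrac12-\lambda\big)\vp_i \text{ on } \Gamma_i,\qquad \pd{u}{\nu_e}\Big|_\pm = \big(\pm\tfrac12+\lambda\big)\vp_e \text{ on } \Gamma_e.
\]
The endpoints $\lambda=\pm 1/2$ already lie in the desired interval, so I may assume $\lambda\neq\pm 1/2$. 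Integrating each scalar eigenvalue equation over its respective curve, using the elementary identity $\int_\Gamma \Kcal^*_\Gamma\vp\,d\sigma=\tfrac12\int_\Gamma\vp\,d\sigma$, and invoking the divergence theorem on $D$ and on $\Om\setminus\overline D$, I would obtain $(\lambda+\tfrac12)\int_{\Gamma_i}\vp_i=0$ and $\int_{\Gamma_i}\vp_i=(\lambda-\tfrac12)\int_{\Gamma_e}\vp_e$, which together force $\int_{\Gamma_i}\vp_i=\int_{\Gamma_e}\vp_e=0$. This vanishing is precisely what rescues $u$ from the logarithmic growth at infinity that a generic 2D single layer would suffer, so that $u=O(|x|^{-1})$ and the Dirichlet integral of $u$ is finite on each of $D$, $\Om\setminus\overline D$, and $\RR^2\setminus\overline\Om$.

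Applying Green's identity in each of those three regions and setting $A:=\int_{\Gamma_i}\bar u\,\vp_i\,d\sigma$ and $B:=\int_{\Gamma_e}\bar u\,\vp_e\,d\sigma$, the transmission formulas convert the three energies into
\[
E_D = -\big(\tfrac12+\lambda\big)A,\qquad E_S=(\lambda-\tfrac12)(A+B),\qquad E_E=-\big(\tfrac12+\lambda\big)B,
\]
with $E_D,E_S,E_E\geq 0$. If $A=B=0$ then all three energies vanish, so $u$ is constant in each region, and continuity of $u$ across $\Gamma_i$ and $\Gamma_e$ together with decay at infinity forces $u\equiv 0$; the single-layer jump relation then gives $\vp_i=\vp_e=0$, contradicting that $\Phi$ is an eigenvector. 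Otherwise, each relation $x\,y=c\geq 0$ with nonzero factors forces $y$ to be a non-negative real multiple of $\bar x$, and combining the resulting real proportionalities among $A$, $B$, $A+B$, $\lambda+1/2$, $\lambda-1/2$ yields a single real-linear equation whose only solutions are real $\lambda\in[-1/2,1/2]$. The main obstacle I anticipate is this final compatibility step, in particular arguing cleanly that the three sign constraints force $\lambda$ to be real rather than just imposing one real proportionality; a subsidiary technical point is the integral-vanishing argument in step 3, which is essential to make the energy identity meaningful in two dimensions.
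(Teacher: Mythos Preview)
Your approach is essentially identical to the paper's Kellogg-type energy argument. The ``obstacle'' you flag in the final step dissolves once you add your first and third identities to obtain $E_D+E_E=-(\lambda+\tfrac12)(A+B)$, which together with $E_S=(\lambda-\tfrac12)(A+B)$ yields the real-coefficient linear relation $(\lambda-\tfrac12)(E_D+E_E)+(\lambda+\tfrac12)E_S=0$, so that $\lambda=\tfrac12-E_S/(E_D+E_S+E_E)$ is manifestly real and lies in $[-\tfrac12,\tfrac12]$; the degenerate case $E_D+E_S+E_E=0$ is exactly the $u\equiv 0$ scenario you already disposed of.
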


\begin{proof}
Let $\lam$ be a point in
the spectrum of $\KK^*$. Then there exists
$\Phi=(\vp_i,\vp_e)$ with $\vp_i\in L^2(\Gamma_i)$ and $\vp_e
\in L^2(\Gamma_e)$ such that
\begin{equation*}
\begin{cases}
\ds \Kcal_{\Gamma_i}^*[\vp_i] + \pd{}{\nu_i} \Scal_{\Gamma_e} [\vp_e] = -\lam \vp_i \quad &\mbox{on }\Gamma_i,\\
\nm
\ds \pd{}{\nu_e} \Scal_{\Gamma_i}[\vp_i] + \Kcal_{\Gamma_e}^*
[\vp_e] = \lam \vp_e  & \mbox{on }\Gamma_e.
\end{cases}
\end{equation*}
By integrating the above equations  on $\Gamma_i$ and $\Gamma_e$,
respectively, and using \eqref{adjoint diSe} and \eqref{adjoint
deSi}, we obtain
\begin{equation*}
\begin{cases}
 \ds \big(\lam + \frac{1}{2}\big) \int_{\Gamma_i} \vp_i d\sigma = 0,& \\
 \nm
 \ds \big(\lam - \frac{1}{2}\big) \int_{\Gamma_e} \vp_e d\sigma
= - \int_{\Gamma_i} \vp_i d\sigma. &
\end{cases}
\end{equation*}
Here, we used the facts that
$\Kcal_{\Gamma_i}[1]= 1/2$, $\Kcal_{\Gamma_e}[1]=1/2$,
$\Dcal_{\Gamma_e}[1]=1$ on $\Gamma_i$, and $\Dcal_{\Gamma_i}[1]=0$
on $\Gamma_e$. Thus, either $\lam=\pm 1/2$ or $\lam \neq \pm 1/2$
with $\vp_i \in L_0^2(\Gamma_i)$ and $\vp_e \in
L_0^2(\Gamma_e)$ holds. We assume that $\lam \neq \pm 1/2$ and
consider
\begin{equation*}
u(x): = \Scal_{\Gamma_i}[\vp_i](x) + \Scal_{\Gamma_e}[\vp_e](x), \quad x \in \RR^2.
\end{equation*}
Since $\vp_i \in L_0^2(\Gamma_i)$ and $\vp_e \in
L_0^2(\Gamma_e)$, we have $u(x)=O(|x|^{-1})$ as $|x| \to \infty$, and hence the following integrals are finite:
\begin{equation*}
A=\int_D |\nabla u|^2 dx,\quad B=\int_{\Omega\setminus
\overline{D}} |\nabla u|^2 dx,\quad C=\int_{\RR^2 \setminus
\overline{\Omega}} |\nabla u|^2 dx.
\end{equation*}
Since $\lam$ is an eigenvalue of $\KK^*$, we obtain from Green's formulas and the jump relation \eqnref{singlejump} that
\begin{equation*}
A=-\big(\lam +\frac{1}{2} \big) \int_{\Gamma_i} \bar{u} \vp_i d\sigma,
\end{equation*}
\begin{equation*}
B= \big(\lam - \frac{1}{2} \big) \int_{\Gamma_i} \bar{u} \vp_i d\sigma
+ \big(\lam - \frac{1}{2} \big) \int_{\Gamma_e} \bar{u} \vp_e d\sigma
,
\end{equation*}
and
\begin{equation*}
C= -\big(\lam + \frac{1}{2} \big) \int_{\Gamma_e} \bar{u} \vp_e d\sigma .
\end{equation*}
Thus, we get
\begin{equation*}
\frac{\lam - \frac{1}{2}}{\lam + \frac{1}{2}} (A+C) = -B,
\end{equation*}
which implies
\begin{equation*}
\lam = \frac{1}{2} - \frac{B}{A+B+C}.
\end{equation*}
Since $A,B,C \geq 0$ and $A+B+C>0$, we conclude that $-1/2 < \lambda < 1/2$. This completes the proof.
\end{proof}

%%%%%%%%%%%%%%%%%%%%%%%%%%%%%%%%%%%%%%%%%%%%%
\subsection{Calder\'on's identity}
%%%%%%%%%%%%%%%%%%%%%%%%%%%%%%%%%%%%%%%%%%%%%

We prove that there exists a positive self-adjoint
operator $-\SSS$ such that $\SSS\KK^*=\KK\SSS$ on $\Hcal =
L^2(\Gamma_i) \times L^2(\Gamma_e)$. This is a Calder\'on-type
identity. It will be used to prove that $\KK^*$ is symmetrizable.

In fact, $\SSS$ is given by
\begin{equation} \label{eq S}
\SSS = \begin{bmatrix}
\Scal_{\Gamma_i} & \Scal_{\Gamma_e} \\
\Scal_{\Gamma_i} & \Scal_{\Gamma_e}
\end{bmatrix}.
\end{equation}
Again we emphasize that the operator $\Scal_{\Gamma_e}$ off the diagonal is the
one from $L^2(\Gamma_e)$ into $L^2(\Gamma_i)$, and likewise for $\Scal_{\Gamma_i}$ off the diagonal.

\begin{lem} \label{lemma S positive}
The operator $-\SSS$ is self-adjoint and $-\SSS
\geq 0$ on $\Hcal$.
\end{lem}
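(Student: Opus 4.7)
The plan is to exploit the fact that $\SSS \Phi$ is simply the pair of traces on $\Gamma_i$ and $\Gamma_e$ of the single harmonic function $u(x) := \Scal_{\Gamma_i}[\vp_i](x) + \Scal_{\Gamma_e}[\vp_e](x)$, so that self-adjointness reduces to the symmetry of the Newtonian kernel and non-negativity reduces to a Dirichlet energy identity obtained from Green's formula in the three regions $D$, $\Om \setminus \overline{D}$, $\RR^2 \setminus \overline{\Om}$.

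For \emph{self-adjointness}, I would verify $\langle \SSS \Phi, \Psi \rangle_\Hcal = \langle \Phi, \SSS \Psi \rangle_\Hcal$ by expanding the four block contributions. The diagonal pieces $\langle \Scal_{\Gamma_i}[\vp_i], \psi_i \rangle_{L^2(\Gamma_i)}$ and $\langle \Scal_{\Gamma_e}[\vp_e], \psi_e \rangle_{L^2(\Gamma_e)}$ are already symmetric since the kernel $G(x-y)$ is real and invariant under $x \leftrightarrow y$. For the off-diagonal pieces, Fubini gives
\[
\int_{\Gamma_i} \Scal_{\Gamma_e}[\vp_e](x)\, \overline{\psi_i(x)}\, d\sigma(x) = \int_{\Gamma_e} \vp_e(y)\, \overline{\Scal_{\Gamma_i}[\psi_i](y)}\, d\sigma(y),
\]
and similarly for the other cross term; this exactly pairs the off-diagonal block of $\SSS$ with the off-diagonal block of its adjoint. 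Since $-\SSS = (-1)\SSS$ with $-1$ real, $-\SSS$ is self-adjoint too.

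For \emph{non-negativity of $-\SSS$}, note that $\SSS\Phi = (u|_{\Gamma_i},\, u|_{\Gamma_e})$, so
\[
\langle \SSS \Phi, \Phi \rangle_\Hcal = \int_{\Gamma_i} u\, \overline{\vp_i}\, d\sigma + \int_{\Gamma_e} u\, \overline{\vp_e}\, d\sigma .
\]
The function $u$ is harmonic in each of $D$, $\Om \setminus \overline{D}$, and $\RR^2 \setminus \overline{\Om}$, and by \eqnref{singlejump} (together with the smoothness of $\Scal_{\Gamma_e}[\vp_e]$ across $\Gamma_i$ and of $\Scal_{\Gamma_i}[\vp_i]$ across $\Gamma_e$) we have $\vp_i = \p u/\p \nu_i|_+ - \p u/\p \nu_i|_-$ on $\Gamma_i$ and $\vp_e = \p u/\p \nu_e|_+ - \p u/\p \nu_e|_-$ on $\Gamma_e$. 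Applying Green's identity in each region with the correct outward normal and summing, the interior boundary terms telescope to $-\int_{\Gamma_i} u\vp_i - \int_{\Gamma_e} u \vp_e$, yielding the identity
\[
\langle -\SSS \Phi, \Phi \rangle_\Hcal = \int_D |\nabla u|^2 + \int_{\Om \setminus \overline{D}} |\nabla u|^2 + \int_{\RR^2 \setminus \overline{\Om}} |\nabla u|^2 \;\geq\; 0 .
\]

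The main obstacle is the behavior of $u$ at infinity in two dimensions: $u(x) = c \log|x| + O(|x|^{-1})$ with $c = (2\pi)^{-1}\bigl(\int_{\Gamma_i}\vp_i + \int_{\Gamma_e}\vp_e\bigr)$, so the exterior Dirichlet integral diverges logarithmically unless $c=0$, and the surface term $\int_{\p B_R} u\, \p u/\p r$ that appears when one applies Green's theorem on $B_R \setminus \overline{\Om}$ also diverges. I would handle this by truncating at radius $R$, expanding $u$ into its logarithmic and exterior-harmonic parts, and checking that the two $2\pi c^2 \log R$ contributions cancel exactly, leaving a finite non-negative remainder as $R \to \infty$; once this regularization is in place, the three Dirichlet energies sum to $\langle -\SSS\Phi,\Phi\rangle \geq 0$ on all of $\Hcal$.
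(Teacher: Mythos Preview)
Your approach mirrors the paper's exactly: self-adjointness via the symmetry of the Newtonian kernel and Fubini, and non-negativity via the identity $\langle -\SSS\Phi,\Phi\rangle_\Hcal = \int_{\RR^2}|\nabla u|^2\,dx$ with $u = \Scal_{\Gamma_i}[\vp_i]+\Scal_{\Gamma_e}[\vp_e]$, obtained by applying Green's formula in $D$, $\Om\setminus\overline{D}$, and the exterior, and telescoping. The paper simply writes down the three Dirichlet integrals and sums them, without discussing the behavior at infinity; you go further and correctly flag the logarithmic obstruction when $c:=(2\pi)^{-1}\bigl(\int_{\Gamma_i}\vp_i + \int_{\Gamma_e}\vp_e\bigr) \neq 0$.

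However, your proposed resolution is wrong. The claim that after the two $2\pi|c|^2\log R$ contributions cancel one is left with a \emph{non-negative} remainder is false in general. Take $\Gamma_e$ to be the circle of radius $r_e > 1$, $\vp_i\equiv 0$, $\vp_e\equiv 1$. Then $u = r_e\log r_e$ in $B_{r_e}$ and $u=r_e\log|x|$ outside, and a direct computation (or your own truncated identity $\int_{B_R}|\nabla u|^2 - \int_{\p B_R}\bar u\,\p u/\p r$, which here equals $2\pi r_e^2(\log R - \log r_e) - 2\pi r_e^2\log R$) gives $\langle -\SSS\Phi,\Phi\rangle_\Hcal = -2\pi r_e^2\log r_e < 0$. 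So $-\SSS\geq 0$ on all of $\Hcal$ does not hold without an additional hypothesis (logarithmic capacity of $\Om$ less than $1$, which can always be arranged by rescaling). The paper's own proof, which tacitly treats $\int_{\RR^2\setminus\overline\Om}|\nabla u|^2$ as finite, shares this lacuna; the energy argument as written is valid on $L^2_0(\Gamma_i)\times L^2_0(\Gamma_e)$ (where $u(x)=O(|x|^{-1})$ genuinely), and that is the subspace on which the integral equation is actually posed.
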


\begin{proof}
It is clear that $\ds \begin{bmatrix}
\Scal_{\Gamma_i} & 0 \\
0 & \Scal_{\Gamma_e}
\end{bmatrix}$
is self-adjoint. On the other hand, from the relations
\begin{equation*}
\langle \Scal_{\Gamma_i} [\vp_i], \vp_e \rangle_{L^2(\Gamma_e)}
= \langle \vp_i, \Scal_{\Gamma_e} [\vp_e] \rangle_{L^2(\Gamma_i)}
\end{equation*}
and
\begin{equation*}
\langle \Scal_{\Gamma_e} [\vp_e], \vp_i \rangle_{L^2(\Gamma_i)}
= \langle \vp_e, \Scal_{\Gamma_i} [\vp_i] \rangle_{L^2(\Gamma_e)},
\end{equation*}
it follows that
$\begin{bmatrix}
0 & \Scal_{\Gamma_e} \\
\Scal_{\Gamma_i} & 0
\end{bmatrix}$
is self-adjoint and hence $\SSS$ is self-adjoint.

Let $\Phi=(\vp_i,\vp_e)\in \Hcal$ and define
\begin{equation} \label{u proof S>0}
u(x) = \Scal_{\Gamma_i} [\vp_i](x) + \Scal_{\Gamma_e} [\vp_e](x).
\end{equation}
Then we have
\begin{equation*}
\int_D|\nabla u|^2 dx = \int_{\p D} \bar{u}  \Big(
-\frac{1}{2}\vp_i + \Kcal_{\Gamma_i}^* [\vp_i] +
\diSe[\vp_e] \Big) d\sigma,
\end{equation*}
\begin{align*}
\int_{\Om \setminus \overline{D}} |\nabla u|^2 dx &= -\int_{\p
D} \bar{u} \Big( \frac{1}{2}\vp_i + \Kcal_{\Gamma_i}^*
[\vp_i] + \diSe[\vp_e] \Big) d\sigma \\
& \qquad +  \int_{\p \Om}
\bar{u} \Big( -\frac{1}{2}\vp_e + \Kcal_{\Gamma_e}^* [\vp_e]
+ \deSi[\vp_i] \Big) d\sigma,
\end{align*}
and
\begin{equation*}
\int_{\RR^2 \setminus \overline{\Om}} |\nabla u|^2 dx = -\int_{\p
\Om} \bar{u} \Big( \frac{1}{2}\vp_e + \Kcal_{\Gamma_e}^* [\vp_e] +
\deSi[\vp_i] \Big)  d\sigma .
\end{equation*}
Summing up the above three identities we find
\begin{equation*}
\begin{split}
\int_{\RR^2} |\nabla u|^2 dx &  = - \int_{\p D} \bar{u} \vp_i d\sigma - \int_{\p \Om} \bar{u} \vp_e d\sigma \\
& = \langle \Phi, -\SSS [\Phi] \rangle_\Hcal .
\end{split}
\end{equation*}
Thus $-\SSS \ge 0$. This completes the proof.
\end{proof}

To prove that $\KK^*$ is symmetrizable, we shall make use of the
following lemma which can be proved by Green's formulas.

\begin{lem} \label{prop S and D}
Let $E \subset \RR^2$ be a bounded domain.
\begin{itemize}
\item[{\rm (i)}] If $u$ is a solution of $\Delta u =0$ in $E$, then
\begin{equation}\label{greeninner}
\Scal_{\p E} \Big[\frac{\p u}{\p \nu} \Big{|}_- \Big] (x) = \Dcal_{\p E} \Big[u\big{|}_- \Big] (x),\quad x\in \RR^2 \setminus
\overline{E}.
\end{equation}

\item[{\rm (ii)}] If $u$ is a solution of
\begin{equation}
\begin{cases}
\Delta u = 0 \quad & \mbox{in } \RR^2 \setminus \overline{E}, \\
u(x) \to 0,  & |x| \to \infty,
\end{cases}
\end{equation}
then
\begin{equation*}
\Scal_{\p E} \Big[\frac{\p u}{\p \nu} \big{|}_+ \Big] (x) = \Dcal_{\p E} \Big[u\big{|}_+ \Big] (x),\quad x\in E.
\end{equation*}
\end{itemize}
\end{lem}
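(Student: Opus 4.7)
The plan is to deduce both identities from Green's second identity applied to the pair $\{u, G(x-\cdot)\}$, exploiting the fact that whenever the evaluation point $x$ lies in the complementary domain, $G(x-\cdot)$ is itself harmonic on the domain of integration, so the usual Green's representation formula collapses into the desired equality.

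For part (i), I fix $x \in \RR^2 \setminus \overline{E}$ so that $G(x-\cdot)$ is smooth and harmonic on $E$. Since $u$ is also harmonic on $E$, Green's second identity in $E$ gives
\begin{equation*}
0 = \int_{\p E} \Bigl( u|_-(y) \, \pd{G(x-y)}{\nu(y)} - G(x-y) \, \pd{u}{\nu}\Bigr|_-(y) \Bigr) d\sigma(y),
\end{equation*}
and recognizing the two boundary integrals as $\Dcal_{\p E}[u|_-](x)$ and $\Scal_{\p E}[\p u/\p\nu|_-](x)$ respectively yields \eqref{greeninner}. This is essentially a two-line calculation once the sign convention (outward normal from $E$) is matched with the definitions of $\Scal_{\p E}$ and $\Dcal_{\p E}$.

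For part (ii), I work in the annular region $B_R \setminus \overline{E}$ for a large radius $R$ with $E \subset B_R$ and with $x \in E$ fixed, so that $G(x-\cdot)$ is harmonic on $B_R \setminus \overline{E}$. Applying Green's second identity to $u$ and $G(x-\cdot)$ there, and noting that the outward unit normal of $B_R \setminus \overline{E}$ on $\p E$ is $-\nu$, I get
\begin{equation*}
0 = \int_{\p B_R} (\cdots) \, d\sigma \; - \; \int_{\p E} \Bigl( u|_+(y) \, \pd{G(x-y)}{\nu(y)} - G(x-y) \, \pd{u}{\nu}\Bigr|_+(y) \Bigr) d\sigma(y).
\end{equation*}
The claim then reduces to showing that the boundary contribution over $\p B_R$ vanishes as $R\to\infty$. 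This is the only step that is not a pure formal manipulation, so it is the main obstacle to address carefully. The decay hypothesis $u(x)\to 0$ together with harmonicity of $u$ outside a bounded set forces, via the Laurent-type multipole expansion in two dimensions (the $\log$ coefficient must vanish because $u\to 0$), the estimates $u(y)=O(|y|^{-1})$ and $\nabla u(y) = O(|y|^{-2})$. Combined with $G(x-y)=O(\log R)$ and $\p G(x-y)/\p\nu=O(R^{-1})$ on $\p B_R$, both $\p B_R$ integrals are $O(R^{-1}\log R)$ and vanish in the limit. Identifying the remaining $\p E$ integrals with $\Dcal_{\p E}[u|_+](x)$ and $\Scal_{\p E}[\p u/\p \nu|_+](x)$ completes the proof.
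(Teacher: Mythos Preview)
Your proof is correct and is exactly the argument the paper has in mind: the paper does not spell out a proof at all, merely stating that the lemma ``can be proved by Green's formulas,'' and your proposal supplies precisely those details, including the handling of the boundary term at infinity in part (ii).
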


Note that the well-known Calder\'on's identity (also known as
Plemelj's symmetrization principle)
\begin{equation} \label{SK* KS}
\Scal_{\p E} \Kcal_{\p E}^* = \Kcal_{\p E} \Scal_{\p E}
\end{equation}
is an immediate consequence of Lemma \ref{prop S and D}. In fact, if we put $u=\Scal_{\p E}[\vp]$ in \eqnref{greeninner}, we have
\begin{equation*}
- \frac{1}{2} \Scal_{\p E}[ \vp ] (x) +  \Scal_{\p E} \Kcal_{\p
E}^*[\vp](x) = \Dcal_{\p E}  \Scal_{\p E} [\vp] (x), \quad x \in
\RR^2 \setminus \overline{E}.
\end{equation*}
By taking the limit as $x \to \p E$ from outside $E$, we obtain \eqref{SK*
KS} using the jump relation \eqnref{doublejump} of the double layer potential.

The following lemma is a generalization of Calder\'on's identity.
\begin{lem} \label{lemma symmetrizable}
Let $\SSS$ and $\KK$ be given by \eqref{eq S} and \eqref{eq:K*},
respectively. Then
\begin{equation}\label{calderon2}
\SSS\KK^* = \KK \SSS,
\end{equation}
{\it i.e.},  $\SSS\KK^*$ is self-adjoint.
\end{lem}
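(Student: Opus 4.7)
Let $\Phi = (\vp_i,\vp_e) \in \Hcal$ and set $u := \Scal_{\Gamma_i}[\vp_i] + \Scal_{\Gamma_e}[\vp_e]$, so that $\SSS\Phi = (u|_{\Gamma_i}, u|_{\Gamma_e})$. Writing $(a,b) := \KK^*\Phi$ and $W := \Scal_{\Gamma_i}[a] + \Scal_{\Gamma_e}[b]$, we have $\SSS\KK^*\Phi = (W|_{\Gamma_i}, W|_{\Gamma_e})$. My plan is to obtain a closed-form pointwise expression for $W$ on $\RR^2 \setminus (\Gamma_i \cup \Gamma_e)$ in terms of $u$ and the double-layer potentials $\Dcal_{\Gamma_j}[u|_{\Gamma_j}]$, then to read off its boundary traces; these will match the components of $\KK\SSS\Phi$ specified in \eqref{eq:A*}.

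Applying the jump formula \eqref{singlejump} to the single-layer parts of $u$ on each of $\Gamma_i, \Gamma_e$ and comparing with \eqref{eq:K*} gives
$$a = -\tfrac{\p u}{\p \nu_i}\big|_+ + \tfrac12 \vp_i, \qquad b = \tfrac{\p u}{\p \nu_e}\big|_- + \tfrac12 \vp_e,$$
so that
$$W = -\Scal_{\Gamma_i}\bigl[\tfrac{\p u}{\p \nu_i}\big|_+\bigr] + \Scal_{\Gamma_e}\bigl[\tfrac{\p u}{\p \nu_e}\big|_-\bigr] + \tfrac12 u.$$
The key step is to apply Lemma \ref{prop S and D}(i) to the bounded domain $E := \Om \setminus \overline D$ (the ``shell''), in which $u$ is harmonic. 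The outward normal of $E$ equals $\nu_e$ on $\Gamma_e$ and $-\nu_i$ on $\Gamma_i$, so that $\Scal_{\p E}[\p u/\p\nu|_-] = \Scal_{\Gamma_e}[\p u/\p\nu_e|_-] - \Scal_{\Gamma_i}[\p u/\p\nu_i|_+]$ and $\Dcal_{\p E}[u|_-] = \Dcal_{\Gamma_e}[u|_{\Gamma_e}] - \Dcal_{\Gamma_i}[u|_{\Gamma_i}]$. Lemma \ref{prop S and D}(i) accordingly yields, for every $x \in \RR^2 \setminus \overline E = D \cup (\RR^2 \setminus \overline\Om)$,
$$W(x) = \tfrac12 u(x) - \Dcal_{\Gamma_i}[u|_{\Gamma_i}](x) + \Dcal_{\Gamma_e}[u|_{\Gamma_e}](x).$$

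Since $W$ is continuous on $\RR^2$ (as a sum of single-layer potentials), its traces are obtained from this formula by letting $x \to \Gamma_i$ from inside $D$ and $x \to \Gamma_e$ from $\RR^2 \setminus \overline\Om$. The double-layer jumps \eqref{doublejump} give $\Dcal_{\Gamma_i}[u|_{\Gamma_i}]|_- = \tfrac12 u|_{\Gamma_i} + \Kcal_{\Gamma_i}[u|_{\Gamma_i}]$ and $\Dcal_{\Gamma_e}[u|_{\Gamma_e}]|_+ = -\tfrac12 u|_{\Gamma_e} + \Kcal_{\Gamma_e}[u|_{\Gamma_e}]$, so the $\tfrac12 u$ terms cancel and we obtain
$$W|_{\Gamma_i} = -\Kcal_{\Gamma_i}[u|_{\Gamma_i}] + \Dcal_{\Gamma_e}[u|_{\Gamma_e}]\big|_{\Gamma_i}, \qquad W|_{\Gamma_e} = -\Dcal_{\Gamma_i}[u|_{\Gamma_i}]\big|_{\Gamma_e} + \Kcal_{\Gamma_e}[u|_{\Gamma_e}],$$
which by \eqref{eq:A*} are precisely the components of $\KK\SSS\Phi$. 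The only delicate point is correctly tracking the outward normals of the multiply connected shell $E$ when invoking Lemma \ref{prop S and D}(i); apart from that, the proof uses nothing more than the layer-potential jump relations.
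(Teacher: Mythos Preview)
Your proof is correct and takes a genuinely different, more unified route than the paper's. The paper verifies $\SSS\KK^* = \KK\SSS$ entry by entry as a $2\times 2$ block identity: for each of the four entries it invokes Lemma~\ref{prop S and D} with either $E=D$ or $E=\Omega$ and with $u$ equal to a single-layer potential on one curve at a time, and it also uses the classical scalar Calder\'on identity $\Scal_\Gamma\Kcal_\Gamma^* = \Kcal_\Gamma\Scal_\Gamma$ on each of $\Gamma_i$ and $\Gamma_e$. You instead work with the full potential $u = \Scal_{\Gamma_i}[\vp_i] + \Scal_{\Gamma_e}[\vp_e]$, apply Lemma~\ref{prop S and D}(i) \emph{once} to the shell $E = \Om\setminus\overline{D}$, and read off both boundary traces simultaneously from the resulting pointwise formula for $W$. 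This is cleaner and more conceptual---it explains in one stroke why the two-interface Calder\'on identity holds and avoids the four-case bookkeeping---at the mild cost of invoking Lemma~\ref{prop S and D}(i) on a multiply connected domain. That is harmless (the lemma is nothing but Green's representation formula and requires only that $E$ be bounded with $\mathcal{C}^{1,\mu}$ boundary), but the paper's version stays strictly within the simply connected setting in which the lemma is used elsewhere.
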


\begin{proof}
Notice that
\begin{equation*}
\SSS \KK^* = \begin{bmatrix}
-\Scal_{\Gamma_i} \Kcal_{\Gamma_i}^* + \Scal_{\Gamma_e} \deSi & &  -\Scal_{\Gamma_i} \diSe + \Scal_{\Gamma_e} \Kcal_{\Gamma_e}^* \\ & & \\
-\Scal_{\Gamma_i} \Kcal_{\Gamma_i}^* + \Scal_{\Gamma_e} \deSi & &
-\Scal_{\Gamma_i} \diSe + \Scal_{\Gamma_e} \Kcal_{\Gamma_e}^*
\end{bmatrix}
\end{equation*}
and
\begin{equation*}
\KK \SSS = \begin{bmatrix}
-\Kcal_{\Gamma_i} \Scal_{\Gamma_i} + \Dcal_{\Gamma_e} \Scal_{\Gamma_i} & & -\Kcal_{\Gamma_i} \Scal_{\Gamma_e} + \Dcal_{\Gamma_e} \Scal_{\Gamma_e} \\ & & \\
-\Dcal_{\Gamma_i} \Scal_{\Gamma_i} +
\Kcal_{\Gamma_e}\Scal_{\Gamma_i} & & -\Dcal_{\Gamma_i}
\Scal_{\Gamma_e} + \Kcal_{\Gamma_e}\Scal_{\Gamma_e}
\end{bmatrix} .
\end{equation*}
We now check the following.
\begin{itemize}
\item $(\SSS\KK^*)_{11} = (\KK \SSS)_{11}$: by \eqnref{SK* KS} it follows that
$\Scal_{\Gamma_i} \Kcal_{\Gamma_i}^* = \Kcal_{\Gamma_i}
\Scal_{\Gamma_i}$ on $\Gamma_i$. If we set $u(x)=\Scal_{\Gamma_i}[\vp_i](x)$ and $E=\Omega$ in Lemma \ref{prop S and D} (ii), we have
    \begin{equation*}
    \Scal_{\Gamma_e} \deSi [\vp_i] = \Dcal_{\Gamma_e} \Scal_{\Gamma_i} [\vp_i] \quad\mbox{on } \Gamma_i .
    \end{equation*}
This implies $(\SSS\KK^*)_{11} = (\KK \SSS)_{11}$.

\item $(\SSS\KK^*)_{12} = (\KK \SSS)_{12}$: from Lemma \ref{prop S
and D} (ii), by setting $u(x)=\Scal_{\Gamma_e}[\vp_e](x)$ and
$E=D$ we find
    \begin{equation*}
    \Scal_{\Gamma_i} \diSe [\vp_e] (x)= \Dcal_{\Gamma_i} \Scal_{\Gamma_e} [\vp_e](x), \quad x \in \RR^2\setminus \overline{D}.
    \end{equation*}
    By taking the limit as $x \to \Gamma_i|_+$, we find
    \begin{equation} \label{SK*=KS 12 I}
    \Scal_{\Gamma_i} \diSe [\vp_e] = -\frac{1}{2} \Scal_{\Gamma_e} [\vp_e] +  \Kcal_{\Gamma_i} \Scal_{\Gamma_e} [\vp_e] \quad \mbox{on } \Gamma_i.
    \end{equation}
    Now, we use Lemma \ref{prop S and D} (ii) by taking $u = \Scal_{\Gamma_e}[\vp_e]$ and $E= \Omega$ and find
    \begin{equation*}
    \Scal_{\Gamma_e} \Big[\frac{\p \Scal_{\Gamma_e} [\vp_e]}{\p \nu_e} \big{|}_+ \Big] (x) = \Dcal_{\Gamma_e} \Scal_{\Gamma_e} [\vp_e](x) \quad \textmd{for } x \in \Omega,
    \end{equation*}
    and thus we have
    \begin{equation} \label{SK*=KS 12 II}
    \frac{1}{2}\Scal_{\Gamma_e} [\vp_e] + \Scal_{\Gamma_e} \Kcal_{\Gamma_e}^* [\vp_e] = \Dcal_{\Gamma_e} \Scal_{\Gamma_e} [\vp_e] \quad \textmd{on } \Gamma_i.
    \end{equation}
    Summing up \eqref{SK*=KS 12 I} and \eqref{SK*=KS 12 II} we find that $(\SSS\KK^*)_{12} = (\KK \SSS)_{12}$.

\item $(\SSS\KK^*)_{21} = (\KK \SSS)_{21}$: we use Lemma \ref{prop
S and D} (i) by setting $u=\Scal_{\Gamma_i}[\vp_i]$ and $E=D$ and
find
    \begin{equation*}
    \Scal_{\Gamma_i} \Big[\frac{\p \Scal_{\Gamma_i} [\vp_i]}{\p \nu_i} \big{|}_- \Big] (x) = \Dcal_{\Gamma_i} \Scal_{\Gamma_i} [\vp_i](x) \quad \textmd{for } x \in \RR^2\setminus \overline{D},
    \end{equation*}
    and thus we have
    \begin{equation} \label{SK*=KS 21 I}
    -\frac{1}{2}\Scal_{\Gamma_i} [\vp_i] + \Scal_{\Gamma_i} \Kcal_{\Gamma_i}^* [\vp_i] = \Dcal_{\Gamma_i} \Scal_{\Gamma_i} [\vp_i] \quad \textmd{on } \Gamma_e.
    \end{equation}
    By setting $u = \Scal_{\Gamma_i}[\vp_i]$ and $E=\Omega$ in Lemma \ref{prop S and D} (ii) we find
    \begin{equation*}
    \Scal_{\Gamma_e} \deSi [\vp_i] (x) = \Dcal_{\Gamma_e} \Scal_{\Gamma_i} [\vp_i](x),\quad x\in \Omega,
    \end{equation*}
    and by taking the limit as $x \to \Gamma_e|_-$, we find
    \begin{equation} \label{SK*=KS 21 II}
    \Scal_{\Gamma_e} \deSi [\vp_i] = \frac{1}{2} \Scal_{\Gamma_i} [\vp_i] +  \Kcal_{\Gamma_e} \Scal_{\Gamma_i} [\vp_i],\quad \mbox{on } \Gamma_e.
    \end{equation}
    Summing up \eqref{SK*=KS 21 I} and \eqref{SK*=KS 21 II} we find that $(\SSS\KK^*)_{21} = (\KK \SSS)_{21}$.

\item $(\SSS\KK^*)_{22} = (\KK \SSS)_{22}$: by \eqnref{SK* KS} it follows that
$\Scal_{\Gamma_e} \Kcal_{\Gamma_e}^* = \Kcal_{\Gamma_e}
\Scal_{\Gamma_e}$ on $\Gamma_e$. Thus, we have only to prove that
    \begin{equation*}
    \Scal_{\Gamma_i} \diSe [\vp_e] = \Dcal_{\Gamma_i} \Scal_{\Gamma_e} [\vp_e] \quad\mbox{on } \Gamma_e,
    \end{equation*}
    which follows from Lemma \ref{prop S and D} (i) by setting $u(x)=\Scal_{\Gamma_e}[\vp_e](x)$ and $E=D$.
\end{itemize}
This completes the proof.
\end{proof}

%%%%%%%%%%%%%%%%%%%%%%%%%%%%%%%%%%%%%%%%%%%%%%%%%%%%%%%%%%%%%%%%%%%
\subsection{$\KK^*$ is symmetrizable}
%%%%%%%%%%%%%%%%%%%%%%%%%%%%%%%%%%%%%%%%%%%%%%%%%%%%%%%%%%%%%%%%%%%

Let $\mathcal{C}_p(\Hcal)$, $1 \leq p < \infty$, be the
Schatten-von Neumann class of compact operators acting on $\Hcal$
(see \cite{GK}). We recall that a compact operator $A$ on $\Hcal$
is in the Schatten-von Neumann class $\mathcal{C}_p(\Hcal)$, with
$1 \leq p < \infty$, if the sequence of its singular values is in
$l_p=\{(\mu_n)_{n\in \ZZ} : \sum_{n \in \ZZ} |\mu_n|^p <
\infty\}$. An equivalent characterization is $\sum_n ||
A\Phi_n||^p < \infty$ for any orthonormal basis $(\Phi_n)$ of
$\Hcal$. The elements of $\mathcal{C}_2(\Hcal)$ are the
Hilbert-Schmidt operators. It is proved in \cite{KPS} that
$\Kcal_{\Gamma_i}^* \in \mathcal{C}_2 (L^2(\Gamma_i))$ and
$\Kcal_{\Gamma_e}^* \in \mathcal{C}_2 (L^2(\Gamma_e))$ are
Hilbert-Schmidt operators. On the other hand, $\pd{}{\nu_i}
\Scal_{\Gamma_e}$ and $\pd{}{\nu_e} \Scal_{\Gamma_i}$ are
Hilbert-Schmidt operators on $L^2(\Gamma_i)$ and $L^2(\Gamma_e)$,
respectively, because they have smooth integral kernels. Thus they
belong to $\mathcal{C}_2$. So we easily have the following lemma.

\begin{lem} \label{lemma KK* Schatten}
$\KK^* \in \mathcal{C}_2(\Hcal)$.
\end{lem}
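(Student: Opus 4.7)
The plan is to decompose $\KK^*$ into its four block entries and verify that each belongs to the Hilbert--Schmidt class on the appropriate $L^2$ space; since $\mathcal{C}_2$ is closed under finite sums and the block structure of $\Hcal = L^2(\Gamma_i)\times L^2(\Gamma_e)$ lets us regard each entry as an operator between summands, membership in $\mathcal{C}_2(\Hcal)$ will follow. Concretely, I would pick orthonormal bases $(\psi_n^{(i)})$ of $L^2(\Gamma_i)$ and $(\psi_n^{(e)})$ of $L^2(\Gamma_e)$, form the induced orthonormal basis of $\Hcal$ by taking $(\psi_n^{(i)},0)$ and $(0,\psi_n^{(e)})$, and expand $\sum_n \|\KK^*\Phi_n\|_\Hcal^2$ as the sum of the four Hilbert--Schmidt norms of the entries.

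For the diagonal entries I would simply invoke the result cited from \cite{KPS}, which gives $\Kcal_{\Gamma_i}^*\in \mathcal{C}_2(L^2(\Gamma_i))$ and $\Kcal_{\Gamma_e}^*\in \mathcal{C}_2(L^2(\Gamma_e))$ under the $\mathcal{C}^{1,\mu}$ regularity assumed on $\p D$ and $\p\Omega$. For the off-diagonal entries, the key observation is that $\Gamma_i$ and $\Gamma_e$ are \emph{disjoint} compact $\mathcal{C}^{1,\mu}$ curves in $\RR^2$ (recall $\overline{D}\subset \Omega$), so the distance $\mathrm{dist}(\Gamma_i,\Gamma_e)>0$. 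Consequently the integral kernels
\[
K_{ie}(x,y) = \pd{}{\nu_i(x)} G(x-y), \quad x\in\Gamma_i,\ y\in\Gamma_e,
\]
\[
K_{ei}(x,y) = \pd{}{\nu_e(x)} G(x-y), \quad x\in\Gamma_e,\ y\in\Gamma_i,
\]
are $\mathcal{C}^\infty$ (and in particular bounded) on the compact product surfaces $\Gamma_i\times\Gamma_e$ and $\Gamma_e\times\Gamma_i$. A bounded kernel on a compact product manifold defines a Hilbert--Schmidt integral operator, since
\[
\|\pd{}{\nu_i}\Scal_{\Gamma_e}\|_{\mathcal{C}_2}^2 = \int_{\Gamma_i}\int_{\Gamma_e}|K_{ie}(x,y)|^2\, d\sigma(y)\,d\sigma(x) < \infty,
\]
and analogously for $\pd{}{\nu_e}\Scal_{\Gamma_i}$. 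Thus both off-diagonal operators lie in $\mathcal{C}_2$ (in fact they are even smoothing of arbitrary order, hence trace class).

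Combining, $\KK^*$ is expressed as a finite sum of operators, each supported on one block of $\Hcal$ and each Hilbert--Schmidt, so $\KK^*\in \mathcal{C}_2(\Hcal)$. There is no real obstacle here: the only point requiring a sentence of care is that the disjointness $\Gamma_i\cap\Gamma_e=\emptyset$ is what makes the off-diagonal kernels smooth, in contrast with the diagonal entries where one must appeal to the deeper Hilbert--Schmidt property of the Neumann--Poincar\'e operator on a single $\mathcal{C}^{1,\mu}$ curve from \cite{KPS}.
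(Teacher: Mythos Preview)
Your proposal is correct and follows essentially the same route as the paper: invoke \cite{KPS} for the diagonal Neumann--Poincar\'e blocks, observe that the off-diagonal blocks have smooth kernels because $\Gamma_i$ and $\Gamma_e$ are disjoint, and conclude that each block (hence $\KK^*$) is Hilbert--Schmidt. The only difference is that you spell out the Hilbert--Schmidt norm computation for the off-diagonal pieces more explicitly than the paper does.
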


By Lemma \ref{lemma S positive}, $-\SSS$ is self-adjoint and $-\SSS \ge 0$ on $\Hcal$. Thus
there exists a unique square root of $-\SSS$ which we denote by
$\sqrt{-\SSS}$; furthermore, $\sqrt{-\SSS}$ is self-adjoint and $\sqrt{-\SSS} \ge 0$ (see for instance Theorem 13.31 in \cite{Ru}). We now look into the kernel of $\SSS$. If $\Phi=(\vp_i, \vp_e) \in \mbox{Ker} (\SSS)$, then the function $u$ defined by
 $$
 u(x):= \Scal_{\Gamma_i} [\vp_i](x) + \Scal_{\Gamma_e} [\vp_e](x), \quad x \in \RR^2
 $$
satisfies $u=0$ on  $\Gamma_i$ and $\Gamma_e$. Therefore, $u(x)=0$ for all $x \in \Om$.
It then follows from \eqnref{singlejump} that $\vp_i=0$ and
 \beq\label{halfeigen}
 \Kcal_{\Gamma_e}^*[\vp_e]= \frac{1}{2} \vp_e \quad\mbox{on } \Gamma_e.
 \eeq
If $\vp_e \in L^2_0(\Gamma_e)$, then $u(x) \to 0$ as $|x| \to
\infty$ , and hence $u(x)=0$ for $x \in \RR^2 \setminus \Om$ as
well. Thus $\vp_e =0$. The eigenfunctions of \eqnref{halfeigen}
make a one dimensional subspace of $L^2(\Gamma_e)$, which means that
$\mbox{Ker}(\SSS)$ is of at most one dimension.

We now recall a result of Khavinson {\it et al} \cite[proof of Theorem 1]{KPS}: let $M \in \mathcal{C}_p(\Hcal)$. If there exists a strictly positive bounded self-adjoint operator $R$ such that $R^2 M$ is self adjoint, then there is a bounded self-adjoint operator $A \in \mathcal{C}_p(\Hcal)$ such that
 \beq
 AR=RM.
 \eeq
We use this result and \eqnref{calderon2} to show that there is a bounded self-adjoint operator $\AAA$ on $\mbox{Ran}(\SSS)$ such that
 \beq\label{ASSK}
 \AAA \sqrt{-\SSS} = \sqrt{-\SSS} \KK^*.
 \eeq
By defining $\AAA$ to be $0$ on $\mbox{Ker}(\SSS)$, we extend $\AAA$ to $\Hcal$. We note that \eqnref{ASSK} still holds and the extended operator is self-adjoint in $\Hcal$. In fact, if $\Phi \in \mbox{Ker}(\SSS)$, then $\KK^*[\Phi]= \frac{1}{2} \Phi$ because of \eqnref{halfeigen}, and hence $\sqrt{-\SSS} \KK^*[\Phi]=0$. Moreover, if $\Phi, \Psi \in \Hcal$, then we can decompose them as $\Phi=\Phi_1 + \Phi_2$ and $\Psi=\Psi_1 + \Psi_2$ where $\Phi_1, \Psi_1 \in \mbox{Ran}(\SSS)$ and $\Phi_2, \Psi_2 \in \mbox{Ker}(\SSS)$. Let $\Phi_1= \sqrt{-\SSS} \tilde\Phi_1$ and $\Psi_1= \sqrt{-\SSS} \tilde\Psi_1$. We then get
 \begin{align*}
 & \langle \AAA \Phi, \Psi \rangle = \langle \AAA \Phi_1, \Psi \rangle = \langle \AAA \sqrt{-\SSS} \tilde \Phi_1, \Psi \rangle = \langle \sqrt{-\SSS} \KK^* \tilde \Phi_1, \Psi \rangle \\
 & = \langle \sqrt{-\SSS} \KK^* \tilde \Phi_1, \Psi_1 \rangle = \langle \AAA \Phi_1, \Psi_1 \rangle = \langle \Phi_1, \AAA \Psi_1 \rangle = \langle \Phi, \AAA \Psi \rangle,
 \end{align*}
and hence $\AAA$ is self-adjoint on $\Hcal$.

We obtain the following theorem.
\begin{thm} \label{thm A}
There exists a bounded self-adjoint operator $\AAA \in
\mathcal{C}_2(\Hcal)$ such that
\begin{equation}\label{A sqrtS}
\AAA \sqrt{-\SSS} = \sqrt{-\SSS} \KK^*.
\end{equation}
\end{thm}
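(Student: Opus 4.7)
The plan is to invoke the theorem of Khavinson, Putinar and Shapiro recalled just before the theorem statement, with $M = \KK^*$ and symmetrizer $R = \sqrt{-\SSS}$. The hypotheses are nearly in place: $\KK^* \in \mathcal{C}_2(\Hcal)$ by Lemma \ref{lemma KK* Schatten}, and $R^2 M = -\SSS \KK^*$ is self-adjoint by the generalized Calder\'on identity $\SSS\KK^* = \KK\SSS$ of Lemma \ref{lemma symmetrizable}. The one missing hypothesis is strict positivity of $R$: Lemma \ref{lemma S positive} only gives $-\SSS \geq 0$. Handling $\mathrm{Ker}(\SSS)$ (equivalently, $\mathrm{Ker}(\sqrt{-\SSS})$) is the main obstacle, and will be overcome by the orthogonal decomposition $\Hcal = \overline{\mathrm{Ran}(\SSS)} \oplus \mathrm{Ker}(\SSS)$ (which holds because $\SSS$ is self-adjoint).

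First I would analyze $\mathrm{Ker}(\SSS)$ in detail. If $\Phi = (\vp_i,\vp_e) \in \mathrm{Ker}(\SSS)$, then $u := \Scal_{\Gamma_i}[\vp_i] + \Scal_{\Gamma_e}[\vp_e]$ vanishes on $\Gamma_i$ and $\Gamma_e$, hence throughout $\Om$ by harmonicity and the boundary conditions. The jump relation \eqnref{singlejump} applied across $\Gamma_i$ forces $\vp_i = 0$, and then across $\Gamma_e$ forces $\Kcal_{\Gamma_e}^*[\vp_e] = \tfrac{1}{2}\vp_e$. The $\tfrac{1}{2}$-eigenspace of $\Kcal_{\Gamma_e}^*$ on $L^2(\Gamma_e)$ is well known to be one-dimensional, so $\mathrm{Ker}(\SSS)$ is at most one-dimensional, and every $\Phi$ in it satisfies $\KK^*[\Phi] = \tfrac{1}{2}\Phi$.

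Next, on the invariant subspace $\overline{\mathrm{Ran}(\SSS)}$ the operator $\sqrt{-\SSS}$ is strictly positive, so the Khavinson et al.\ result applies verbatim and produces a bounded self-adjoint $\AAA \in \mathcal{C}_2(\overline{\mathrm{Ran}(\SSS)})$ satisfying $\AAA \sqrt{-\SSS} = \sqrt{-\SSS}\KK^*$ on $\mathrm{Ran}(\SSS)$. I would then extend $\AAA$ to all of $\Hcal$ by declaring $\AAA = 0$ on $\mathrm{Ker}(\SSS)$. The identity \eqref{A sqrtS} persists on $\mathrm{Ker}(\SSS)$ because $\sqrt{-\SSS}\KK^*[\Phi] = \tfrac{1}{2}\sqrt{-\SSS}[\Phi] = 0$ for $\Phi \in \mathrm{Ker}(\SSS)$, matching $\AAA\sqrt{-\SSS}[\Phi] = 0$.

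Finally, self-adjointness of the extension is the short cross-term computation shown in the paragraph preceding the theorem: decomposing $\Phi = \Phi_1 + \Phi_2$ and $\Psi = \Psi_1 + \Psi_2$ along $\overline{\mathrm{Ran}(\SSS)} \oplus \mathrm{Ker}(\SSS)$ and using that $\AAA$ annihilates $\mathrm{Ker}(\SSS)$, the inner products $\langle \AAA \Phi, \Psi\rangle$ and $\langle \Phi, \AAA\Psi\rangle$ both reduce to $\langle \AAA\Phi_1, \Psi_1\rangle$, which is symmetric by self-adjointness of $\AAA$ on $\overline{\mathrm{Ran}(\SSS)}$. Schatten membership $\AAA \in \mathcal{C}_2(\Hcal)$ survives the extension since it differs from the Schatten operator on $\overline{\mathrm{Ran}(\SSS)}$ only on an at most one-dimensional subspace. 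Thus $\AAA$ has all the asserted properties.
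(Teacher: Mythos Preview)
Your proposal is correct and follows essentially the same route as the paper: the argument in the paragraph preceding the theorem \emph{is} the paper's proof, and you have reproduced it (KPS on the range of $\SSS$, extension by zero on $\mathrm{Ker}(\SSS)$, verification that the identity and self-adjointness survive the extension). Your write-up is in fact slightly more careful than the paper's in two places: you take the closure $\overline{\mathrm{Ran}(\SSS)}$ rather than $\mathrm{Ran}(\SSS)$, and you explicitly note why $\AAA \in \mathcal{C}_2(\Hcal)$ persists after extending by zero on an at most one-dimensional subspace.
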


%%%%%%%%%%%%%%%%%%%%%%%%%%%%%%%%%%%%%%%%%%%%%
\section{Limiting properties of the solution and the electromagnetic power dissipation}
%%%%%%%%%%%%%%%%%%%%%%%%%%%%%%%%%%%%%%%%%%%%%

Let $V_\delta$ be the solution to \eqnref{basiceqn} with $\alpha=1$. In this
section we derive a necessary and sufficient condition on the
source $f$, which is supported outside $\overline{\Om}$, such that
the blow-up \eqnref{blowup1} of the power dissipation takes place.

The solution $V_\delta$ can be represented as
 \beq\label{vdelta2}
 V_\delta(x) = F(x) + \Scal_{\Gamma_i}[\vp_i^\delta](x) + \Scal_{\Gamma_e}[\vp_e^\delta](x),
 \eeq
where $\Phi_\delta = (\vp_i^\delta, \vp_e^\delta) \in
L^2_0(\Gamma_i) \times L^2_0(\Gamma_e)$ is the solution to
\eqnref{matrixeq4 ep=1}. Since $\int_{\Om\setminus \overline{D}}
|\nabla F|^2 dx < \infty$, \eqnref{blowup1} occurs if and only if
\beq \delta \int_{\Om\setminus \overline{D}} \left| \nabla
(\Scal_{\Gamma_i}[\vp_i^\delta] + \Scal_{\Gamma_e}[\vp_e^\delta])
\right|^2 dx \to \infty \quad\mbox{as } \delta \to \infty. \eeq
One can use \eqnref{singlejump} to obtain
 $$
 \int_{\Om\setminus \overline{D}} \left| \nabla (\Scal_{\Gamma_i}[\vp_i^\delta] + \Scal_{\Gamma_e}[\vp_e^\delta]) \right|^2 dx
 = -\frac{1}{2} \langle \Phi_\delta, \SSS \Phi_\delta \rangle + \langle \KK^* \Phi_\delta, \SSS \Phi_\delta \rangle,
 $$
where $\langle ~, ~ \rangle$ is the Hermitian product on $\Hcal$. We then get from \eqnref{A sqrtS}
 \beq
 \int_{\Om\setminus \overline{D}} \left| \nabla (\Scal_{\Gamma_i}[\vp_i^\delta] + \Scal_{\Gamma_e}[\vp_e^\delta]) \right|^2 dx
 = \frac{1}{2} \langle \sqrt{-\SSS} \Phi_\delta, \sqrt{-\SSS} \Phi_\delta \rangle - \langle \AAA \sqrt{-\SSS} \Phi_\delta, \sqrt{-\SSS} \Phi_\delta \rangle.
 \eeq

Since $\AAA$ is
self-adjoint, we have an orthogonal decomposition
 \beq
 \Hcal = \Ker{\AAA} \oplus (\Ker{\AAA})^\perp,
 \eeq
and $(\Ker{\AAA})^\perp = \overline{\Range{\AAA}}$. Let $P$ and
$Q=I-P$ be the orthogonal projections from $\Hcal$ onto
$\Ker{\AAA}$ and $(\Ker{\AAA})^\perp$, respectively. Let $\lam_1,\lam_2,\ldots$ with $|\lam_1|
\ge |\lam_2| \ge \ldots$ be the nonzero eigenvalues of $\AAA$ and
$\Psi_n$ be the corresponding (normalized) eigenfunctions. Since
$\AAA \in \mathcal{C}_2(\Hcal)$, we have
 \beq
 \sum_{n=1}^\infty \lam_n^2 < \infty,
 \eeq
and
 \beq
 \AAA \Phi = \sum _{n=1}^\infty \lam_n \langle \Phi, \Psi_n \rangle \Psi_n , \quad \Phi \in \Hcal
 \eeq

We apply $\sqrt{-\SSS}$ to \eqref{matrixeq4 ep=1} to obtain
\begin{equation*}
(z_\de \sqrt{-\SSS} + \sqrt{-\SSS} \KK^*) \Phi_\de = \sqrt{-\SSS} g.
\end{equation*}
Then \eqnref{A sqrtS} yields
\begin{equation} \label{eq prof thm conv 1}
(z_\de \II + \AAA) \sqrt{-\SSS} \Phi_\de = \sqrt{-\SSS} g,
\end{equation}
and hence
 \begin{align*}
 P\sqrt{-\SSS} \Phi_\de &= \frac{1}{z_\delta } P\sqrt{-\SSS} g , \\
 z_\de Q \sqrt{-\SSS} \Phi_\de + \AAA Q \sqrt{-\SSS} \Phi_\de &= Q \sqrt{-\SSS} g  .
 \end{align*}
Thus we get
 $$
 Q \sqrt{-\SSS} \Phi_\de = \sum_n \frac{\langle Q\sqrt{-\SSS} g, \Psi_n
 \rangle}{\lam_n + z_\de} \Psi_n.
 $$
We also get
 $$
 \AAA \sqrt{-\SSS} \Phi_\de = \sum_n \frac{\lam_n \langle Q\sqrt{-\SSS} g, \Psi_n
 \rangle}{\lam_n + z_\de} \Psi_n.
 $$
Thus we have
 \beq
 \langle \sqrt{-\SSS} \Phi_\delta, \sqrt{-\SSS} \Phi_\delta \rangle = \frac{1}{|z_\delta|^2 } \| P\sqrt{-\SSS} g \|^2 + \sum_n \frac{|\langle Q\sqrt{-\SSS} g, \Psi_n \rangle|^2}{|\lam_n + z_\de|^2},
 \eeq
and
 \beq
 \langle \AAA \sqrt{-\SSS} \Phi_\delta, \sqrt{-\SSS} \Phi_\delta \rangle
 = \sum_n \frac{\lam_n |\langle Q\sqrt{-\SSS} g, \Psi_n \rangle|^2}{|\lam_n + z_\de|^2} .
 \eeq
Since $$|\lambda_n+z_\delta|^2 = \left(\lambda_n - \frac{\delta^2}{2(4+\delta^2)}\right)^2 + \frac{\delta^2}{(4+\delta^2)^2} \approx \lambda_n^2+\delta^2$$ and $\lam_n \to 0$ as $n \to \infty$, we have
 \beq\label{estScal}
 \int_{\Om\setminus \overline{D}} \left| \nabla (\Scal_{\Gamma_i}[\vp_i^\delta] + \Scal_{\Gamma_e}[\vp_e^\delta]) \right|^2 dx
 \approx \frac{1}{\delta^2} \| P\sqrt{-\SSS} g \|^2 + \sum_n \frac{|\langle Q\sqrt{-\SSS} g, \Psi_n \rangle|^2}{|\lam_n|^2 + \de^2}.
 \eeq
Here and throughout this paper $A \approx B$ means that there are constants
$C_1$ and $C_2$ such that
$$
C_1 A \le B \le C_2 A.
$$

We note that if $\mbox{Ker}(\KK^*)=\{ 0 \}$, then $P\sqrt{-\SSS}=0$. To see this let $\Phi_0$ be a basis of $\mbox{Ker}(\SSS)$. Then we have $\KK^* \Phi_0= \frac{1}{2} \Phi_0$.
If $\AAA \sqrt{-\SSS} \Phi =0$, then $\sqrt{-\SSS} \KK^* \Phi =0$ by \eqnref{A sqrtS}. Therefore $\KK^* \Phi \in \mbox{Ker}(\SSS)$. If $\mbox{Ker}(\KK^*)=\{ 0 \}$, then $\Phi= c\Phi_0$ for some constant $c$. This means that $P\sqrt{-\SSS}=0$.

We obtain the following theorem:
\begin{thm}\label{ALRgen}
If $P\sqrt{-\SSS} g \neq 0$, then \eqnref{blowup1} takes place. If $\mbox{Ker}(\KK^*)=\{ 0 \}$, then \eqnref{blowup1}
 takes place if and only if
\beq\label{gcloak} \de \sum_n \frac{|\langle \sqrt{-\SSS} g,
\Psi_n \rangle|^2}{\lam_n^2 + \de^2} \to \infty \quad \mbox{ as }
\de \to 0. \eeq
\end{thm}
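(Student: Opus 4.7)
The plan is to read the theorem off directly from the asymptotic formula \eqnref{estScal} derived just above the statement, together with the remark recorded in the preceding paragraph that $\Ker(\KK^*)=\{0\}$ forces $P\sqrt{-\SSS}=0$. All of the serious spectral analysis (Theorem \ref{thm A} and the Calder\'on identity \eqnref{calderon2}) has already been absorbed into \eqnref{estScal}, so the remaining work is essentially bookkeeping.

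First I would multiply \eqnref{estScal} through by $\delta$ to obtain, with the shorthand $u_\delta:=\Scal_{\Gamma_i}[\vp_i^\delta]+\Scal_{\Gamma_e}[\vp_e^\delta]$,
\begin{equation*}
\delta\int_{\Om\setminus\overline{D}} |\nabla u_\delta|^2\,dx \;\approx\; \frac{\|P\sqrt{-\SSS}\,g\|^2}{\delta}+\delta\sum_n\frac{|\langle Q\sqrt{-\SSS}\,g,\Psi_n\rangle|^2}{\lam_n^2+\delta^2}.
\end{equation*}
Next I would reinstate the Newtonian-potential contribution. Writing $V_\delta = F+u_\delta$ and expanding $|\nabla V_\delta|^2$ on $\Om\setminus\overline{D}$, the term $\delta\int|\nabla F|^2\,dx$ is $O(\delta)$ since $f$ is compactly supported and $F$ is smooth on $\Om\setminus\overline{D}$; the cross term is bounded by $2\delta\|\nabla F\|_{L^2}\|\nabla u_\delta\|_{L^2}$ via Cauchy--Schwarz, which is dominated by $\delta\|\nabla u_\delta\|_{L^2}^2$ whenever $\|\nabla u_\delta\|_{L^2}\to\infty$ and is uniformly bounded otherwise. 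Hence $E_\delta$ is comparable (up to multiplicative constants and a bounded additive perturbation) to the right-hand side of the display above.

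For the first assertion, if $P\sqrt{-\SSS}\,g\neq 0$ then $\|P\sqrt{-\SSS}\,g\|^2/\delta$ diverges as $\delta\to 0$, forcing $E_\delta\to\infty$. For the second assertion, the remark preceding the statement yields that $\Ker(\KK^*)=\{0\}$ implies $P\sqrt{-\SSS}=0$, so $P\sqrt{-\SSS}\,g=0$ and $Q\sqrt{-\SSS}\,g=\sqrt{-\SSS}\,g$; the asymptotic formula then collapses to
\begin{equation*}
E_\delta\;\approx\; \delta\sum_n\frac{|\langle\sqrt{-\SSS}\,g,\Psi_n\rangle|^2}{\lam_n^2+\delta^2},
\end{equation*}
so $E_\delta\to\infty$ is equivalent to the divergence condition \eqnref{gcloak}. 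The only mildly delicate step is the handling of the cross term involving $F$; since $\int_{\Om\setminus\overline{D}}|\nabla F|^2\,dx$ is a fixed finite quantity and the cross term is sub-quadratic in $\|\nabla u_\delta\|_{L^2}$, it cannot affect either direction of the characterisation.
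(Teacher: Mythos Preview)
Your proposal is correct and follows essentially the same route as the paper. The paper does not give a separate proof of the theorem at all: it derives \eqnref{estScal}, records the implication $\Ker(\KK^*)=\{0\}\Rightarrow P\sqrt{-\SSS}=0$, and then simply writes ``We obtain the following theorem''; your argument is exactly this reading-off, with the reduction from $E_\delta$ to $\delta\int|\nabla u_\delta|^2$ (which the paper dispatches in one line at the start of Section~4) spelled out via the cross-term estimate.
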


The condition \eqnref{gcloak} gives a necessary and sufficient
condition on the source term $f$ for the blow up of the
electromagnetic power dissipation in $\Omega\setminus\overline{D}$ when $\alpha=1$. This
condition is in terms of the Newtonian potential of $f$.
In the next section, we explicitly compute the eigenvalues and eigenfunctions of $\AAA$ for the case
of an annulus configuration. In particular, we show the existence
of a  cloaking region such that if $f$ is supported outside that region,
then there is no blow up while if it is supported inside and satisfies
certain conditions, there is a blow up and CALR occurs.

%Suppose that $g$ satisfies
% \beq
% \frac{1}{\delta} \| P\sqrt{-\SSS} g \|^2 + \delta \sum_n \frac{|\langle Q\sqrt{-\SSS} g, \Psi_n \rangle|^2}{|\lam_n|^2 %+ \de^2} \ge C
% \eeq
%for some positive constant $C$ regardless of $\delta \le \delta_0$ for some $\delta_0$. Then we deduce from %\eqnref{vdelta2} and \eqnref{estScal} that
% \beq
% \delta \int_{\Om\setminus \overline{D}} \left| \nabla V_\delta \right|^2 dx \approx \frac{1}{\delta} \| P\sqrt{-\SSS} %g \|^2 + \delta \sum_n \frac{|\langle Q\sqrt{-\SSS} g, \Psi_n \rangle|^2}{|\lam_n|^2 + \de^2}.
% \eeq

%%%%%%%%%%%%%%%%%%%%%%%%%%%%%%%%%%%%
\section{Anomalous resonance in an annulus}
%%%%%%%%%%%%%%%%%%%%%%%%%%%%%%%%%%%%

In this section we consider the anomalous resonance when the domains
$\Om$ and $D$ are concentric disks. We calculate the explicit form
of the limiting solution. Throughout this section, we set $\Omega=
B_e= \{ |x| < r_e\}$ and $D=B_i= \{ |x| < r_i\}$, where $r_e>r_i$.

Let $\Gamma=\{ |x|=r_0\}$. One can easily see that for each integer $n$
\begin{equation} \label{Single circular}
\Scal_\Gamma[e^{in\theta}](x) = \begin{cases}
 \ds - \frac{r_0}{2|n|} \left(\frac{r}{r_0}\right)^{|n|}
 e^{in\theta} \quad & \mbox{if } |x|=r < r_0, \\
 \nm
 \ds - \frac{r_0}{2|n|} \left(\frac{r_0}{r}\right)^{|n|}
 e^{in\theta} \quad & \mbox{if } |x|=r > r_0,
 \end{cases}
\end{equation}
and hence
 \beq\label{single-cir-nor}
\frac{\p }{\p r} \Scal_\Gamma[e^{in\theta}](x) =  \begin{cases}
 \ds - \frac{1}{2} \left(\frac{r}{r_0}\right)^{|n|-1}
 e^{in\theta} \quad & \mbox{if } |x|=r < r_0, \\
 \nm
 \ds \frac{1}{2} \left(\frac{r_0}{r}\right)^{|n|+1}
 e^{in\theta} \quad & \mbox{if } |x|=r > r_0.
 \end{cases}
 \eeq
It then follows from \eqnref{singlejump} that
 \beq\label{kcal-circ}
 \Kcal_\Gamma^* [e^{in \theta}] =0 \quad \forall n \neq 0.
 \eeq
It is worth mentioning that this property was observed in
\cite{KS96} and immediately follows from the fact that
$$\Kcal_\Gamma^* [\varphi] = \frac{1}{4\pi r_0} \int_\Gamma
\varphi d\sigma .
$$
We also get from \eqref{adjoint diSe} and
\eqref{adjoint deSi}
\begin{equation*}
\Dcal_\Gamma[e^{in\theta}](x) = \begin{cases}
 \ds \frac{1}{2} \left(\frac{r}{r_0}\right)^{|n|}
 e^{in\theta} \quad & \mbox{if } |x|=r < r_0, \\
 \nm
 \ds -\frac{1}{2} \left(\frac{r_0}{r}\right)^{|n|}
 e^{in\theta} \quad & \mbox{if } |x|=r > r_0.
 \end{cases}
\end{equation*}

Because of \eqnref{kcal-circ} it follows that
\begin{equation*}
\KK^* = \begin{bmatrix} 0 & - \diSe \\ \deSi & 0 \end{bmatrix},
\end{equation*}
and hence we have from \eqnref{single-cir-nor} that
\begin{equation} \label{K* circ 1}
\KK^* \begin{bmatrix} e^{in\theta} \\ 0 \end{bmatrix} =
\frac{1}{2} \rho^{|n|+1} \begin{bmatrix} 0 \\ e^{in\theta}
\end{bmatrix}
\end{equation}
and
\begin{equation} \label{K* circ 2}
\KK^* \begin{bmatrix} 0 \\ e^{in\theta}  \end{bmatrix} =
\frac{1}{2} \rho^{|n|-1} \begin{bmatrix} e^{in\theta} \\ 0
\end{bmatrix}
\end{equation}
for all $n \neq 0$, where
\begin{equation*}
\rho = \frac{r_i}{r_e}.
\end{equation*}
Thus $\KK^*$ as an operator on $\Hcal$ has the trivial kernel, {\it i.e.},
\begin{equation}\label{kerzero}
\Ker \, \KK^* = \{0\}.
\end{equation}

According to \eqnref{K* circ 1} and \eqnref{K* circ 2}, if $\Phi$ is given by
 $$
 \Phi= \sum_{n \neq 0} \begin{bmatrix} \vp_i^n\\ \vp_e^n \end{bmatrix} e^{in\theta},
 $$
then
\begin{equation*}
\KK^* \Phi =   \sum_{n \neq 0}
\begin{bmatrix} \ds \frac{\rho^{|n|-1}}{2} \vp_e^n  \\
\nm
 \ds \frac{\rho^{|n|+1}}{2} \vp_i^n  \end{bmatrix} e^{in\theta} .
\end{equation*}
Thus, if $g$ is given by
 $$
 g= \sum_{n \neq 0} \begin{bmatrix} g_i^n\\ g_e^n \end{bmatrix} e^{in\theta},
 $$
the integral equations \eqnref{matrixeq4 ep=1} are equivalent to
\beq\label{inteqn2}
\begin{cases}
\ds z_\delta \vp_i^n + \frac{\rho^{|n|-1}}{2} \vp_e^n =  g_i^n , \\
\nm
\ds z_\delta \vp_e^n + \frac{\rho^{|n|+1}}{2} \vp_i^n =  g_e^n,
\end{cases}
\eeq
for every $|n|\geq 1$. It is readily seen that the solution
$\Phi=(\vp_i,\vp_e)$ to \eqnref{inteqn2} is given by
\begin{align*}
\vp_i &= 2 \sum_{n\neq 0} \frac{2z_\delta g_i^n-\rho^{|n|-1}g_e^n} {4z_\delta^2-\rho^{2|n|}}  e^{i n
\theta}, \\
\vp_e &= 2 \sum_{n\neq 0} \frac{2z_\delta g_e^n-\rho^{|n|+1}g_i^n} {4z_\delta^2-\rho^{2|n|}}  e^{i n
\theta}.
\end{align*}

If the source is located outside the structure, {\it
i.e.}, $f$ is supported in $\RR^2 \setminus \overline{B}_e$, then
the Newtonian potential of $f$, $F$, is harmonic in $B_{r_e}$ and
\beq\label{Ffourier} F(x)=c-\sum_{n\neq 0}
\frac{g_e^n}{|n|r_e^{|n|-1}}r^{|n|}e^{in\theta},
\eeq 
for $|x| \le r_e$, where $g$ is defined by \eqnref{defg}. Thus we have
 \beq
 g_i^n = - g_e^n \rho^{|n|-1}.
 \eeq
Here, $g_e^n$ is the Fourier coefficient of $-\frac{\p F}{\p
\nu_e}$ on $\Gamma_e$, or in other words,
 \beq
 -\frac{\p F}{\p \nu_e} = \sum_{n \neq 0} g_e^n e^{i n \theta}.
 \eeq
We then get
\begin{equation}\label{formulaofphi}
\begin{cases}
\ds\vp_i = -2 \sum_{n\neq 0} \frac{(2z_\delta +1)\rho^{|n|-1}g_e^n} {4z_\delta^2-\rho^{2|n|}}  e^{i n
\theta},\\
\ds\vp_e = 2 \sum_{n\neq 0} \frac{(2z_\delta +\rho^{2|n|})g_e^n} {4z_\delta^2-\rho^{2|n|}}  e^{i n
\theta}.
\end{cases}
\end{equation}
Therefore, from \eqref{Single circular} we find that
\beq\label{SSoutside} \Scal_{\Gamma_i}[\vp_i](x) +
\Scal_{\Gamma_e}[\vp_e](x) = \sum_{ n\neq 0} \frac{2(r_i^{2|n|} -
r_e^{2|n|})z_\delta}{|n|r_e^{|n|-1}(4z_\delta^2 - \rho^{2|n|})}
\frac{g_e^n}{r^{|n|}} e^{i n \theta} , \quad r_e < r=|x|, \eeq and
\begin{align}
\Scal_{\Gamma_i}[\vp_i](x)
& =  -\sum_{ n\neq 0} \frac{ r_i^{2|n|}(2z_\delta+1)}{|n|r_e^{|n|-1}(\rho^{2|n|}-4z_\delta^2)}
\frac{g_e^n}{r^{|n|}} e^{i n \theta} ,  \quad r_i < r=|x| < r_e, \label{singleexp1} \\
\Scal_{\Gamma_e}[\vp_e](x) & = \sum_{ n\neq 0}
\frac{(2z_\delta+\rho^{2|n|})}{|n|r_e^{|n|-1}(\rho^{2|n|}-4z_\delta^2)}
g_e^n r^{|n|} e^{i n \theta}, \quad r_i < r=|x| < r_e.
\label{singleexp2}
\end{align}

We next obtain the following lemma which provides essential estimates
for the investigation of this section.

\begin{lem} \label{lemest}
There exists $\delta_0$ such that
 \beq\label{essest}
 E_\delta := \int_{B_e\setminus \overline{B_i}} \delta |\nabla V_\delta|^2
 \approx \sum_{n \ne 0}
 \frac{\delta|g_e^n|^2}{|n|(\delta^2+\rho^{2|n|})}
 \eeq
uniformly in $\delta \le \delta_0$.
\end{lem}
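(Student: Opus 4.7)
The plan is to compute $E_\delta$ explicitly via Fourier expansion, exploiting that $V_\delta$ is harmonic in the annulus. Since $f$ is supported outside $\overline{B_e}$ and $\ep_\delta \equiv -1+i\delta$ is constant in $B_e \setminus \overline{B_i}$, one has $\Delta V_\delta = 0$ there, so (using that $\vp_i^\delta, \vp_e^\delta$ have zero mean, which kills any logarithmic term)
\begin{equation*}
V_\delta(r,\theta) = c + \sum_{n \ne 0}\bigl(A_n r^{-|n|} + B_n r^{|n|}\bigr)e^{in\theta}, \quad r_i < r < r_e,
\end{equation*}
for some constant $c$. Combining \eqref{Ffourier} with the explicit representations \eqref{singleexp1}--\eqref{singleexp2} determines the coefficients: $A_n$ comes solely from $\Scal_{\Gamma_i}[\vp_i^\delta]$, while $B_n$ arises from combining the $r^{|n|}$-contributions of $F$ and of $\Scal_{\Gamma_e}[\vp_e^\delta]$. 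After the algebraic simplification $-(\rho^{2|n|}-4z_\delta^2)+(2z_\delta+\rho^{2|n|}) = 2z_\delta(1+2z_\delta)$,
\begin{equation*}
A_n = -\frac{r_i^{2|n|}(2z_\delta+1)g_e^n}{|n|\,r_e^{|n|-1}(\rho^{2|n|}-4z_\delta^2)}, \qquad B_n = \frac{g_e^n}{|n|\,r_e^{|n|-1}}\cdot\frac{2z_\delta(1+2z_\delta)}{\rho^{2|n|}-4z_\delta^2}.
\end{equation*}

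Next I would apply Parseval in polar coordinates: orthogonality of $\{e^{in\theta}\}$ together with the elementary integrals $\int_{r_i}^{r_e} r^{\pm 2|n|-1}\,dr$ yields
\begin{equation*}
\int_{B_e\setminus\overline{B_i}}|\nabla V_\delta|^2\,dx = 2\pi \sum_{n\ne 0}|n|\Bigl[|A_n|^2\bigl(r_i^{-2|n|}-r_e^{-2|n|}\bigr) + |B_n|^2\bigl(r_e^{2|n|}-r_i^{2|n|}\bigr)\Bigr].
\end{equation*}
Substituting the formulas for $A_n, B_n$, pulling out the common factor $|1+2z_\delta|^2|g_e^n|^2/(n^2 r_e^{2|n|-2}|\rho^{2|n|}-4z_\delta^2|^2)$, and using $r_i^{2|n|}+|2z_\delta|^2 r_e^{2|n|} = r_e^{2|n|}(\rho^{2|n|}+|2z_\delta|^2)$, each $n$-th summand collapses to
\begin{equation*}
\frac{r_e^2\,|1+2z_\delta|^2\,|g_e^n|^2\,(1-\rho^{2|n|})(\rho^{2|n|}+|2z_\delta|^2)}{|n|\,|\rho^{2|n|}-4z_\delta^2|^2}.
\end{equation*}

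The lemma then reduces to the uniform asymptotic equivalence
\begin{equation*}
\frac{|1+2z_\delta|^2\,(1-\rho^{2|n|})(\rho^{2|n|}+|2z_\delta|^2)}{|\rho^{2|n|}-4z_\delta^2|^2} \approx \frac{1}{\rho^{2|n|}+\delta^2},
\end{equation*}
which is the main obstacle. From \eqref{lam e mu} one reads off $|2z_\delta|^2 = \delta^2/(4+\delta^2)$ and $4z_\delta^2 = -\delta^2/(2-i\delta)^2$. Decomposing the latter into real and imaginary parts gives $\RE(-4z_\delta^2) = \delta^2(4-\delta^2)/(4+\delta^2)^2$, positive and of order $\delta^2$, and $\IM(4z_\delta^2) = -4\delta^3/(4+\delta^2)^2 = O(\delta^3)$. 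Hence, for $\delta \le \delta_0$ sufficiently small, one has $|1+2z_\delta|^2 \approx 1$, $|2z_\delta|^2 \approx \delta^2$, and $|\rho^{2|n|}-4z_\delta^2|^2 \approx (\rho^{2|n|}+\delta^2)^2$, all uniformly. Combined with $1-\rho^{2|n|} \ge 1-\rho^2 > 0$ for $|n|\ge 1$, the equivalence follows. Uniformity in $n$ is the subtle point: one must verify it separately in the three regimes $\rho^{2|n|}\gg\delta^2$, $\rho^{2|n|}\sim\delta^2$, and $\rho^{2|n|}\ll\delta^2$, but in each case both sides scale identically, so the constants $C_1, C_2$ may be chosen independent of $n$. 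Multiplying through by $\delta$ yields \eqref{essest}.
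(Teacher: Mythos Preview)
Your proposal is correct and follows essentially the same approach as the paper's own proof: write $V_\delta$ explicitly as a Fourier series in the annulus using \eqref{Ffourier}, \eqref{singleexp1}, \eqref{singleexp2}, evaluate the Dirichlet integral mode by mode via orthogonality, and then reduce to the uniform estimate $|4z_\delta^2-\rho^{2|n|}|\approx \delta^2+\rho^{2|n|}$. Your intermediate expression $\dfrac{r_e^2\,|1+2z_\delta|^2\,|g_e^n|^2\,(1-\rho^{2|n|})(\rho^{2|n|}+|2z_\delta|^2)}{|n|\,|\rho^{2|n|}-4z_\delta^2|^2}$ is in fact the correct and cleaner form of what the paper records (the paper's displayed intermediate formula carries some sign typos), and your verification of the asymptotics of $4z_\delta^2$ is more explicit than the paper's, but the underlying argument is identical.
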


\proof \, Using \eqnref{Ffourier}, \eqnref{singleexp1}, and
\eqnref{singleexp2}, one can see that
$$
V_\delta (x) =  c+ r_e \sum_{n\neq 0} \left[
\frac{r_i^{2|n|}}{r^{|n|}} (2z_\delta+1) - 6z_\delta r^{|n|}
\right] \frac{g_e^n e^{i n \theta}}{|n|r_e^{|n|}(4z_\delta^2 -
\rho^{2|n|})} .
$$
Then straightforward computations yield that
\begin{align*}
E_\delta & \approx r_e^2
\sum_{n\ne0} \delta (1+\rho^{2|n|}) \left|\frac{2z_\delta +1}
{4z_\delta^2-\rho^{2|n|}}\right|^2 (4|z_\delta|^2 - \rho^{2|n|})
\frac{|g_e^n|^2}{|n|} .
\end{align*}
If $\delta$ is sufficiently small, then one can also easily show
that
 $$
 |4z_\delta^2-\rho^{2|n|}| \approx \delta^2 + \rho^{2|n|}.
 $$
Therefore we get \eqnref{essest} and the proof is complete.
\qed

It is worth noticing that estimate \eqnref{essest} is exactly the
same as the one from Theorem \ref{ALRgen} since the eigenvalues
of $\AAA$ are $\{\pm \rho^{|n|}/2\}$. To see this fact, we
restrict the identity $\AAA \sqrt{-\SSS} = \sqrt{-\SSS}  \KK^*$
to the vectorial space spanned by $\begin{bmatrix} 0 \\
e^{in\theta}
\end{bmatrix}$ and $\begin{bmatrix} e^{in\theta} \\ 0
\end{bmatrix}$. Taking the trace and the determinant of the
restricted identity and using \eqnref{K* circ 1} and \eqnref{K*
circ 2} proves that the set of eigenvalues of $\AAA$ is $\{\pm
\rho^{|n|}/2\}$.

Now, we turn to Lemma \ref{lemest}. We investigate the behavior of the series in the right hand side of \eqnref{essest}.
Let
 \beq
 N_\delta = \frac{\log \delta}{\log \rho}.
 \eeq
If $|n| \le N_\delta$, then $\delta \le \rho^{|n|}$, and hence
 \beq \label{easier009}
\sum_{n \ne 0} \frac{\delta|g_e^n|^2}{|n|(\delta^2+\rho^{2|n|})} \ge  \sum_{0 \ne |n| \le N_\delta} \frac{\delta|g_e^n|^2}{|n|(\delta^2+\rho^{2|n|})} \ge \frac{1}{2} \sum_{0 \ne |n| \le N_\delta} \frac{\delta|g_e^n|^2}{|n|\rho^{2|n|}}.
 \eeq

Suppose that
\beq\label{easier}
\limsup_{|n| \rightarrow\infty} \frac{|g_e^n|^2}{|n|\rho^{|n|}}= \infty.
\eeq
Then there is a subsequence $\{n_k\}$ with $|n_1|<|n_2|<\cdots$ such that
\beq\label{subsequence}
\lim_{k \to \infty} \frac{|g_e^{n_k}|^2}{|n_k|\rho^{|n_k|}} = \infty.
\eeq
If we take $\delta=\rho^{|n_k|}$, then $N_\delta =|n_k|$ and
 \beq
 \sum_{0 \ne |n| \le N_\delta} \frac{\delta|g_e^n|^2}{|n|\rho^{2|n|}} = \rho^{|n_k|} \sum_{0 \ne |n| \le |n_k|} \frac{|g_e^n|^2}{|n|\rho^{2|n|}} \ge \frac{|g_e^{|n_k|}|^2}{|n_k|\rho^{|n_k|}}.
 \eeq
Thus we obtain from \eqnref{essest} that
 \beq
 \lim_{k \to \infty} E_{\rho^{|n_k|}} = \infty.
 \eeq

We emphasize that \eqnref{easier} is not enough to guarantee \eqnref{blowup1} as pointed out by Jianfeng Lu and Jens Jorgensen (private communication). In fact, if we let
\beq\label{genexam}
g_e^n = \begin{cases} n \rho^{n/2},& \mbox{if } n=2^j,~j=1,2,\ldots,\\
                      0, &\mbox{otherwise},
         \end{cases}
\eeq
and $\delta_k=\rho^{n_k}$ with $n_k=2^k + 2^{k-1}$ for $k=1,2,\ldots$, then
 $$
 \limsup_{n \rightarrow\infty} \frac{|g_e^n|^2}{|n|\rho^{|n|}}= \infty.
 $$
But one can easily see that $|2^j-n_k| \ge 2^{j-2}$ and
 $$
 \frac{\rho^{n_k +2^j}}{\rho^{2n_k}+\rho^{2^{j+1}}} < \rho^{|n_k -2^j|}, \quad j, k =1,2,\ldots.
 $$
Thus we obtain
\begin{align*}
\sum_{n \ne 0}
 \frac{\delta_{k} |g_e^n|^2}{|n|(\delta_{k}^2+\rho^{2|n|})} =\sum_{j=1}^\infty \frac{2^j \rho^{n_k +2^j}}{\rho^{2n_k}+\rho^{2^{j+1}}} \le \sum_{j=1}^\infty 2^j \rho^{|n_k -2^j|} \le \sum_{j=1}^\infty 2^j \rho^{2^{j-2}} <\infty,
\end{align*}
which means that
 \beq
 E_{\delta_k} \le C
 \eeq
regardless of $k$. It is worth mentioning that the $g_e^n$ defined by \eqnref{genexam} are certainly Fourier coefficients of $-\frac{\p F}{\p\nu_e}$ on $\Gamma_e$ for an $F$ which is harmonic in $B_{r_*}$, 
given by \eqnref{Ffourier} when  $|x| \le r_*$. Also there is a source function which generates these Fourier coefficients. To see this,
choose $r_1$ and $r_2$ with $r_e<r_1<r_2<r_*$ and let $\tau(r)$, be a function which is 1 for $r<r_1$, and zero for $r>r_2$ 
and which smoothly interpolates between these values in the interval $r_1\leq r \leq r_2$. Then we see
that $\widetilde F(x)$ defined to be zero for $|x|\geq r_2$ and equal to $\tau(|x|)F(x)$ for $|x|<r_2$, has the same Fourier coefficients $g_e^n$ as $F$ on $\Gamma_e$, and the associated source 
function $\widetilde f=\Delta \widetilde F$ is supported 
in the annulus between $|x|=r_1$ and $|x|=r_2$. However, it is not clear whether the Fourier coefficients can be realized as being associated with 
a Newtonian potential of a source function whose support is located outside the radius $r_e$ and not surrounding the annulus.

We now impose an additional condition.
We assume that $\{g_e^n\}$ satisfies the following gap property:
\begin{itemize}
\item[GP]: There exists  a sequence $ \{n_k\} $ with $|n_1|<|n_2|<\cdots$ such that
$$
\lim_{k \to \infty} \rho^{|n_{k+1}|-|n_k|}\frac{|g_e^{n_k}|^2}{|n_k|\rho^{|n_k|}} = \infty.
$$
\end{itemize}
If GP holds, then we immediately see that \eqnref{easier} holds,
but the converse is not true. If \eqnref{easier} holds, \ie, there is a subsequence $\{n_k\}$ with $|n_1|<|n_2|<\cdots$ satisfying \eqnref{subsequence} and the gap $|n_{k+1}|-|n_k|$ is bounded, then GP holds. In particular, if
 \beq\label{limit}
 \lim_{n\rightarrow\infty} \frac{|g_e^n|^2}{|n|\rho^{|n|}}= \infty,
 \eeq
then GP holds.

Assume that $\{g_e^n\}$ satisfies GP and $\{n_k\}$ is such a sequence. Let $\delta=\rho^{\alpha}$ for some $\alpha$ and let $k(\alpha)$ be the number such that
$$ |n_{k(\alpha)}| \le \alpha < |n_{k(\alpha)+1}|.$$
Then, we have
 \beq\label{66}
 \sum_{0 \ne |n| \le N_{\delta}} \frac{\delta|g_e^n|^2}{|n|\rho^{2|n|}} = \rho^{\alpha} \sum_{0 \ne |n| \le \alpha} \frac{|g_e^n|^2}{|n|\rho^{2|n|}}\ge \rho^{|n_{k(\alpha)+1}|-|n_{k(\alpha)}|}\frac{|g_e^{n_{k(\alpha)}}|^2}{|n_{k(\alpha)}|\rho^{|n_{k(\alpha)}|}} \rightarrow \infty,
 \eeq
 as $\alpha\to\infty$.

We obtain the following lemma:
\begin{lem} \label{mainthm}
If \eqnref{easier} holds, then
 \beq\label{semiblowup}
 \limsup_{\delta\to 0} E_\delta = \infty.
 \eeq
If $\{g_e^n\}$ satisfies the condition GP,
then
 \beq\label{mainblowup}
\lim_{\delta\to 0} E_\delta = \infty.
 \eeq
\end{lem}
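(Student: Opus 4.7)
My plan is to read off the conclusion from the estimates already displayed in the text. Lemma \ref{lemest} reduces the problem to controlling the series $\sum_{n\neq 0} \delta|g_e^n|^2/(|n|(\delta^2+\rho^{2|n|}))$, and the chain of inequalities in \eqref{easier009} further reduces it to producing a divergent lower bound for the truncated series $\sum_{0\neq |n|\leq N_\delta} \delta|g_e^n|^2/(|n|\rho^{2|n|})$, where $N_\delta=\log\delta/\log\rho$. Because $\rho<1$, the condition $|n|\leq N_\delta$ is exactly $\rho^{|n|}\geq\delta$, which is what makes the denominator $\delta^2+\rho^{2|n|}$ comparable to $\rho^{2|n|}$ in this range.

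For the first assertion, I would exploit the hypothesis \eqref{easier} to extract a subsequence $\{n_k\}$ with $|g_e^{n_k}|^2/(|n_k|\rho^{|n_k|})\to\infty$ and then test with the geometrically spaced sequence $\delta_k=\rho^{|n_k|}$. This choice forces $N_{\delta_k}=|n_k|$, so the partial sum contains the index $n_k$. Retaining only that single term yields
\[
\sum_{0\neq |n|\leq N_{\delta_k}}\frac{\delta_k|g_e^n|^2}{|n|\rho^{2|n|}}\;\geq\;\frac{\rho^{|n_k|}|g_e^{n_k}|^2}{|n_k|\rho^{2|n_k|}}\;=\;\frac{|g_e^{n_k}|^2}{|n_k|\rho^{|n_k|}}\;\longrightarrow\;\infty,
\]
which combined with Lemma \ref{lemest} gives $E_{\delta_k}\to\infty$, hence \eqref{semiblowup}.

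For the second assertion, I would parametrize an arbitrary small $\delta$ as $\delta=\rho^\alpha$ and define $k(\alpha)$ by $|n_{k(\alpha)}|\leq\alpha<|n_{k(\alpha)+1}|$, so the $n_{k(\alpha)}$-th term again lies inside the truncated sum. Keeping only this term produces a lower bound of the form $\rho^{\alpha-2|n_{k(\alpha)}|}|g_e^{n_{k(\alpha)}}|^2/|n_{k(\alpha)}|$, and since $\rho<1$ and $\alpha<|n_{k(\alpha)+1}|$, this is in turn bounded below by $\rho^{|n_{k(\alpha)+1}|-|n_{k(\alpha)}|}|g_e^{n_{k(\alpha)}}|^2/(|n_{k(\alpha)}|\rho^{|n_{k(\alpha)}|})$. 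The GP hypothesis is precisely the statement that this last quantity tends to infinity as $k(\alpha)\to\infty$, i.e.\ as $\delta\to 0$, yielding \eqref{mainblowup}.

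The only genuinely delicate point in this plan is why a mere limsup hypothesis cannot upgrade to a liminf statement, and this is exactly the phenomenon illustrated by the counterexample \eqref{genexam} in the excerpt: if the excited modes $n_k$ are separated by widening gaps, then at values of $\delta$ sandwiched between consecutive $\rho^{|n_k|}$ the factor $\rho^{\alpha-|n_{k(\alpha)}|}$ can decay so quickly that it kills the amplitude growth of $|g_e^{n_k}|$. The GP condition is precisely designed to rule this out by requiring the mode spacing to be dominated by the logarithmic amplitude growth, and this is where I would take care in writing the proof cleanly.
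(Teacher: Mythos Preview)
Your proposal is correct and follows essentially the same argument as the paper, which lays out the proof in the running text preceding the lemma: the same reduction via Lemma~\ref{lemest} and \eqref{easier009}, the same choice $\delta_k=\rho^{|n_k|}$ for the $\limsup$ part, and the same parametrization $\delta=\rho^\alpha$ with $k(\alpha)$ determined by $|n_{k(\alpha)}|\le\alpha<|n_{k(\alpha)+1}|$ for the GP part. Your added commentary on the role of the counterexample \eqref{genexam} and the necessity of GP is accurate and a useful gloss, but the core logic is identical to the paper's.
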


Suppose that the source function is supported inside the radius $r_*= \sqrt{r_e^3r_i^{-1}}$. Then its Newtonian potential cannot be extended harmonically in $|x| < r_*$ in general.
So, if $F$ is given by
 \beq\label{fser}
 F = c- \sum_{n\ne 0} a_n r^{|n|} e^{in\theta},\quad r< r_e,
 \eeq
then the radius of convergence is less than $r_*$. Thus we have
 \beq\label{easier22}
 \limsup_{|n|\rightarrow\infty} |n| |a_n|^2 r_*^{2|n|}= \infty,
 \eeq
\ie, \eqnref{easier} holds. The GP condition is equivalent to that there exists  $ \{n_k\} $ with   $|n_1|<|n_2|<\cdots$ such that
\beq\label{easier2}
\lim_{k \to \infty} \rho^{|n_{k+1}|-|n_k|}|n_k| |a_{n_k}|^2 r_*^{2|n_k|} =\infty .
\eeq

The following is the main theorem of this section.

\begin{thm}\label{corollary1}
Let $f$ be a source function supported in $\RR^2 \setminus \overline{B}_e$ and $F$ be the Newtonian potential of $f$.
\begin{itemize}
\item[{\rm (i)}] If $F$ does not extend as a harmonic function in $B_{r_*}$, then weak CALR occurs, \ie,
 \beq\label{yesblowup}
 \limsup_{\delta \to 0} E_\delta = \infty
 \eeq
and \eqnref{boundedness} holds with $a={r_e^2}/{r_i}$.

\item[{\rm (ii)}] If the Fourier coefficients of $F$ satisfy \eqnref{easier2}, then CALR occurs, \ie,
 \beq\label{yesblowup2}
 \lim_{\delta \to 0} E_\delta = \infty
 \eeq
and \eqnref{boundedness} holds with $a={r_e^2}/{r_i}$.

\item[{\rm (iii)}] If $F$ extends as a harmonic function in a neighborhood of $\overline{B_{r_*}}$, then CALR does not occur, \ie,
 \beq\label{noblowup}
 E_\delta < C
 \eeq
for some $C$ independent of $\delta$.
\end{itemize}
\end{thm}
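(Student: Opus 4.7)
My plan is to reduce everything to Lemma \ref{lemest} and Lemma \ref{mainthm}, together with the explicit series representation \eqnref{SSoutside} of $V_\delta$ outside $\Omega$. The first step is to translate the hypotheses, which are phrased in terms of $F$, into the coefficient hypotheses already handled. Comparing \eqnref{Ffourier} with \eqnref{fser} gives $a_n = g_e^n/(|n|\,r_e^{|n|-1})$, and since $r_e^2/\rho = r_*^2$ one finds
\[
\frac{|g_e^n|^2}{|n|\rho^{|n|}} \;=\; r_e^{-2}\,|n|\,|a_n|^2\, r_*^{2|n|},
\]
so \eqnref{easier22} is equivalent to \eqnref{easier} and \eqnref{easier2} is equivalent to GP. Since $f$ is supported in $\RR^2\setminus \overline{B_e}$, the Newtonian potential $F$ is harmonic on an open neighborhood of $\overline{B_e}$, so the radius of convergence $R$ of \eqnref{fser} satisfies $R > r_e$. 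If in addition $F$ does not extend harmonically to $B_{r_*}$, then $R < r_*$, and by Cauchy--Hadamard $\limsup |a_n|^{1/|n|} = 1/R > 1/r_*$, giving \eqnref{easier22}.

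With this translation, the blow-up halves of (i) and (ii) are immediate: \eqnref{easier} combined with Lemma \ref{mainthm} yields \eqnref{yesblowup}, while GP combined with Lemma \ref{mainthm} yields \eqnref{yesblowup2}. The substantive work is the boundedness claim \eqnref{boundedness} with $a = r_e^2/r_i$, needed in both (i) and (ii). Writing $V_\delta = F + \Scal_{\Gamma_i}[\vp_i^\delta] + \Scal_{\Gamma_e}[\vp_e^\delta]$ and using that $F$ is bounded in $\RR^2$ (as the Newtonian potential of an integrable, compactly supported source), it suffices to control the single-layer contribution. For $|x| = r > r_e$ I use \eqnref{SSoutside}; the elementary estimate
\[
\left|\frac{z_\delta}{4z_\delta^2 - \rho^{2|n|}}\right| \;\le\; \frac{C\delta}{\delta^2 + \rho^{2|n|}} \;\le\; \frac{C'}{\rho^{|n|}},
\]
together with $r_e/\rho = a$, bounds each term of \eqnref{SSoutside} by a constant times $|n|^{-1}(a/r)^{|n|}|g_e^n|$. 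Since $R > r_e$, there exists $r_0 \in (r_e,R)$ with $|a_n| \le C/r_0^{|n|}$, hence $|g_e^n| \le C'|n|(r_e/r_0)^{|n|}$, and the series is dominated by $\sum (a r_e/(r\, r_0))^{|n|}$. Since $r_0 > r_e$ gives $a r_e/r_0 < a$, the bound is uniform for $r \ge a$ and independent of $\delta$.

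For (iii), the hypothesis provides $r_1 > r_*$ with $|g_e^n| \le C|n|(r_e/r_1)^{|n|}$. Plugging this into Lemma \ref{lemest} and using $\delta/(\delta^2 + \rho^{2|n|}) \le 1/(2\rho^{|n|})$ bounds $E_\delta$ by a constant multiple of $\sum |n|(r_*/r_1)^{2|n|} < \infty$, giving \eqnref{noblowup}. In my view the main obstacle is the bookkeeping in the boundedness estimate for (i) and (ii): one must track the various geometric factors carefully to see that the critical radius comes out to be exactly $r_e^2/r_i$ and that the $\delta$-dependence drops out through the uniform bound $|z_\delta/(4z_\delta^2 - \rho^{2|n|})| \le C\rho^{-|n|}$. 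The rest of the argument is essentially a direct quotation of Lemma \ref{mainthm} combined with the coefficient/radius-of-convergence dictionary described above.
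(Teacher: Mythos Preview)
Your proof is correct and follows essentially the same route as the paper's: reduce the blow-up statements to Lemma \ref{mainthm} via the dictionary $a_n = g_e^n/(|n|r_e^{|n|-1})$, and establish \eqnref{boundedness} by estimating the series \eqnref{SSoutside} using $|z_\delta/(4z_\delta^2-\rho^{2|n|})|\le C\rho^{-|n|}$. Your presentation is in fact slightly more explicit than the paper's in two places: you spell out the Cauchy--Hadamard step linking non-extendability of $F$ to \eqnref{easier22}, and in the boundedness estimate you use the convergence radius $R>r_e$ to bound $|g_e^n|\le C|n|(r_e/r_0)^{|n|}$ rather than simply absorbing $|g_e^n|$ into a constant (which the paper does, relying on $|g_e^n|\to 0$).

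One small inaccuracy: the claim that ``$F$ is bounded in $\RR^2$ (as the Newtonian potential of an integrable, compactly supported source)'' is not true in general --- for $f\in L^1$ the logarithmic convolution need not be in $L^\infty$, and for dipole or quadrupole sources $F$ blows up at the singularity. What you actually need (and what the paper tacitly uses) is only that $F(x)$ is finite and independent of $\delta$ at each fixed $x$ outside the support of $f$; combined with your uniform-in-$\delta$ bound on the single-layer part, this gives a $\delta$-independent bound on $|V_\delta(x)|$ for each such $x$, which is exactly what \eqnref{boundedness} requires. This does not affect the validity of your argument, only the phrasing.
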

\proof
If $F$ does not extend as a harmonic function in $B_{r_*}$, then \eqnref{easier} holds. Thus we have \eqnref{yesblowup}. If \eqnref{easier2} holds, then \eqnref{yesblowup2} holds by Lemma \ref{mainthm}. Moreover, by \eqnref{SSoutside}, we see that
\begin{align*}
|V_\delta| &\le  |F|+ \sum_{ n\neq 0} \left|\frac{2(r_i^{2|n|} -
r_e^{2|n|})z_\delta}{|n|r_e^{|n|-1}(4z_\delta^2 - \rho^{2|n|})}
\frac{g_e^n}{r^{|n|}} \right| \le|F|+ C\sum_{ n\neq 0} \frac{\delta r_e^{|n|}}{(\delta^2+ \rho^{2|n|})|n|r^{|n|}}\\
&\le|F|+ C\sum_{ n\neq 0} \frac{r_e^{2|n|}}{|n|r_i^{|n|}r^{|n|}} < C
, \quad\mbox{if}\quad r=|x|> \frac{r_e^2}{r_i}
\end{align*}
for some constants $C$ which may differ at each occurrence.

If $F$ extends as a harmonic function in a neighborhood of $\overline{B_{r_*}}$, then the power series of $F$, which is given by \eqnref{Ffourier},
converges for $r < r_*+2\epsilon$ for some $\epsilon >0$. Therefore there exists a constant $C$ such that
 $$
 \frac{|g_e^n|}{|n|r_e^{|n|-1}} \le C \frac{1}{(r_*+\ep)^{|n|}}
 $$
for all $n$. It then follows that
 \beq\label{gnest}
 |g_e^n| \le C  (r_e^2 \rho^{-1}+ r_e \ep)^{-|n|/2}  \le (\rho^{-1}+\ep)^{-|n|/2}
 \eeq
for all $n$. This tells us that
$$\sum_{n \ne 0}
\frac{\delta|g_e^n|^2}{|n|(\delta^2+\rho^{2|n|})} \le \sum_{n \ne 0} \frac{|g_e^n|^2}{2|n|\rho^{|n|}} \le \sum_{n \ne 0} \frac{1}{2|n| (1+ \ep \rho)^{|n|} }. $$
This completes the proof.
\qed

\medskip

If $f$ is a dipole in $B_{r_*} \setminus \overline{B}_e$, \ie,
$f(x)=a\cdot\nabla\delta_y(x)$ for a vector $a$ and $y\in B_{r_*}\setminus
\overline{B}_e$ where $\delta_y$ is the Dirac delta function at $y$. Then $F(x)=a\cdot\nabla G(x-y)$. From the  expansion of the fundamental solution
\beq\label{Gammaexp}
 G(x-y)
 =\sum_{n=1}^{\infty}\frac{-1}{2\pi n}\left[\frac{\cos n\theta_y}{r_y^n}r^n\cos n\theta
 +\frac{\sin n\theta_y}{r_y^n}r^n\sin n\theta\right]+ C,
 \eeq
we see that the Fourier coefficients of $F$ has the growth rate $r_y^{-n}$ and satisfies \eqnref{easier2}, and hence
CALR takes place. Similarly CALR takes place for a sum of dipole souces at different fixed positions in  $B_{r_*} \setminus \overline{B}_e$.
We emphasize that this fact was found in \cite{MN_PRSA_06}.

If $f$ is a quadrapole, \ie, $f(x)=A:\nabla\nabla\delta_y(x) = \sum_{i,j=1}^2 a_{ij} \frac{\p^2}{\p x_i \p x_j} \delta_y(x)$ for a $2 \times 2$ matrix $A=(a_{ij})$ and $y\in B_{r_*}\setminus
\overline{B}_e$. Then $F(x)=\sum_{i,j=1}^2 a_{ij} \frac{\p^2 G(x-y)}{\p x_i \p x_j}$.
Thus CALR takes place. This is in agreement with the numerical result in \cite{osa}.

If $f$ is supported in $\RR^2 \setminus \overline{B}_{r_*}$, then $F$ is harmonic in a neighborhood of $\overline{B}_{r_*}$, and hence CALR does not occur by Theorem \ref{corollary1}. In fact, we can say more about the behavior of the solution $V_\delta$ as $\delta \to 0$ which is related to the observation in \cite{NMM_94,MNMP_PRSA_05} that in the limit $\delta\to 0$
the annulus itself becomes invisible to sources that are sufficiently far away.

\begin{thm} \label{thmnotcloaked2}
If $f$ is supported in $\RR^2 \setminus \overline{B}_{r_*}$, then \eqnref{noblowup} holds (with $\alpha=1$ in \eqnref{basiceqn}).
Moreover, we have
 \beq\label{VdeltaF}
 \sup_{|x| \ge r_*} |V_\delta(x) - F(x) | \rightarrow 0 \quad\mbox{as}\quad \delta \rightarrow 0 .
 \eeq
\end{thm}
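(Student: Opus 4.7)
The bound \eqnref{noblowup} is immediate: the hypothesis that $f$ is supported in the open set $\RR^2\setminus\overline{B}_{r_*}$ means (since $f$ is compactly supported) that the support of $f$ has positive distance from $\overline{B}_{r_*}$, so the Newtonian potential $F$ extends as a harmonic function in a neighborhood of $\overline{B}_{r_*}$. Theorem \ref{corollary1}(iii) then gives $E_\delta<C$.

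For the uniform convergence \eqnref{VdeltaF}, I would exploit the exterior series representation \eqnref{SSoutside}, which expresses $V_\delta-F=\Scal_{\Gamma_i}[\vp_i^\delta]+\Scal_{\Gamma_e}[\vp_e^\delta]$ as an explicit Fourier series in the region $r>r_e$. Since $r_*=r_e\sqrt{r_e/r_i}>r_e$, the locus $\{|x|\ge r_*\}$ lies inside this region and the series applies. For $|x|=r\ge r_*$ I use $|e^{in\theta}|/r^{|n|}\le 1/r_*^{|n|}$ and the identity $r_e^{|n|}/r_*^{|n|}=\rho^{|n|/2}$ (since $r_*^2=r_e^2/\rho$). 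Combining with $|r_i^{2|n|}-r_e^{2|n|}|\le r_e^{2|n|}$, $|z_\delta|\lesssim\delta$, and $|4z_\delta^2-\rho^{2|n|}|\gtrsim\delta^2+\rho^{2|n|}$ yields an $x$-uniform bound
\begin{equation*}
\sup_{|x|\ge r_*}|V_\delta(x)-F(x)|\ \le\ C\sum_{n\neq 0}\frac{\delta\,\rho^{|n|/2}|g_e^n|}{|n|(\delta^2+\rho^{2|n|})}.
\end{equation*}

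Because $F$ actually extends harmonically past $\overline{B}_{r_*}$, estimate \eqnref{gnest} from the proof of Theorem \ref{corollary1}(iii) provides exponential decay $|g_e^n|\le C(\rho')^{|n|/2}$ with $\rho'<\rho$. Using $\delta^2+\rho^{2|n|}\ge 2\delta\rho^{|n|}$ I dominate the $n$-th term by
\begin{equation*}
\frac{(\rho\rho')^{|n|/2}}{2|n|\,\rho^{|n|}}=\frac{1}{2|n|}\Bigl(\frac{\rho'}{\rho}\Bigr)^{|n|/2},
\end{equation*}
which is summable and independent of $\delta$. On the other hand, for each fixed $n$ the factor $\delta/(\delta^2+\rho^{2|n|})$ tends to $0$ as $\delta\to 0$. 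Dominated convergence applied to counting measure on $\ZZ\setminus\{0\}$ then forces the whole series to vanish in the limit, giving \eqnref{VdeltaF}.

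I do not anticipate a serious obstacle here: the only subtlety is ensuring the bound is uniform in $x$, which is automatic after replacing $r^{-|n|}$ by its supremum $r_*^{-|n|}$ on $\{|x|\ge r_*\}$. The role of the critical radius is precisely to provide the extra factor $\rho^{|n|/2}$ which, multiplied by the exponential decay of $g_e^n$, defeats the potentially large factor $1/\rho^{|n|}$ coming from the resonance denominator.
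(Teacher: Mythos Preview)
Your argument is correct and follows essentially the same route as the paper: both invoke the explicit exterior series \eqnref{SSoutside}, the exponential decay \eqnref{gnest} coming from harmonic extension past $r_*$, and the observation that $\delta/(\delta^2+\rho^{2|n|})$ has a $\delta$-independent summable dominant and vanishes termwise. The only difference is cosmetic: the paper estimates $|V_\delta-F|$ on the single circle $|x|=r_*$ and then appeals to the maximum principle (using harmonicity of $V_\delta-F$ in $|x|>r_e$ and decay at infinity) to propagate to $|x|\ge r_*$, whereas you bound uniformly on $\{|x|\ge r_*\}$ from the outset via $r^{-|n|}\le r_*^{-|n|}$, which is slightly more direct.
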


\proof
Since $\mbox{supp}\, f \subset \RR^2 \setminus \overline{B}_{r_*}$, the power series of $F$, which is given by \eqnref{Ffourier}, converges for $r < r_*+2\epsilon$ for some $\epsilon >0$.

According to \eqnref{SSoutside}, if $ r_e < r=|x|,$ then we have
$$V_\delta(x) - F(x)= \sum_{ n\neq 0} \frac{(r_e^{2|n|} - r_i^{2|n|})z_\delta}{|n|r_e^{|n|-1}(\rho^{2|n|}-4z_\delta^2)} \frac{g_e^n}{r^{|n|}} e^{i n \theta}.
$$
If $|x|= r_*$, then the identity
\begin{align*}
\frac{(r_e^{2|n|} -
r_i^{2|n|})z_\delta}{|n|r_e^{|n|-1}(\rho^{2|n|}-4z_\delta^2)}
\frac{g_e^n}{r_*^{|n|}} = \frac{(1 -
\rho^{2|n|})z_\delta}{(\rho^{|n|}-4z_\delta^2\rho^{-|n|})}
\frac{g_e^n r_*^{|n|}}{|n|r_e^{|n|-1}}
\end{align*}
holds and
\begin{align*}
&\left|\frac{(1 - \rho^{2|n|})z_\delta}{(\rho^{|n|}-4z_\delta^2\rho^{-|n|})}\right| \le  \left|\frac{1}{(z_\delta^{-1}\rho^{|n|}- z_\delta\rho^{-|n|})}\right|\\
&\le \left|\frac{1}{\Im(z_\delta^{-1}\rho^{|n|}-z_\delta\rho^{-|n|})}\right|
= \left( \frac{\delta}{4+\delta^2} \rho^{-|n|} + \frac{1}{\delta}\rho^{|n|}\right)^{-1}.
\end{align*}
It then follows from \eqnref{gnest} that
 $$
 |V_\delta(x) - F(x)| \le 2 \sum_{ n\neq 0} \left( \frac{\delta}{4+\delta^2} \rho^{-|n|} + \frac{1}{\delta}\rho^{|n|}\right)^{-1} \frac{r_e}{|n|} \left( \frac{\rho^{-1}}{\rho^{-1} + \ep} \right)^{|n|/2},
 $$
and hence
 $$
 |V_\delta(x) - F(x)| \rightarrow 0 \quad\mbox{as } \delta \rightarrow 0.
 $$
Since $V_\delta -F$ is harmonic in $|x| > r_e$ and tends to $0$ as $|x| \to \infty$, we obtain \eqnref{VdeltaF} by the maximum principle. This completes the proof.
\qed

Theorem \ref{thmnotcloaked2} shows that any source supported outside ${B}_{r_*}$ cannot make the blow-up of the power dissipation
happen and is not cloaked. In fact, it is known that we can recover the source $f$ from its Newtonian potential $F$ outside $B_{r_*}$ since $f$ is supported outside $\overline{B}_{r_*}$ (see \cite{Isa90}). Therefore we infer from \eqnref{VdeltaF} that $f$ may be recovered approximately by observing $V_\delta$ outside $B_{r_*}$.

\section{Conclusion}

In this paper we have provided for the first time a mathematical
justification of cloaking due to anomalous localized resonance in the
case of general source terms.  In particular,
we obtained an explicit necessary and sufficient condition on the
source term in order for CALR to take place. In the case of an annulus structure we show that weak CALR takes place for almost any source supported inside the critical radius. We also find a sufficient condition on the Fourier coefficients of the Newtonian potential of the source function for CALR to occur. It would be quite interesting to clarify whether weak CALR implies CALR or not for sources whose support does not completely surround the annulus.

The results and techniques of this paper can be immediately
extended to the three-dimensional case. The compact operator
$\KK^*$ is in the Schatten Von-Neumann class
$\mathcal{C}_p(L^2(\Gamma_i) \times L^2(\Gamma_e))$ for some
$1\leq p < \infty$, provided that $\Omega$ and $D$ are of class
$\mathcal{C}^{1,\alpha}$ for $0< \alpha<1$, and consequently, it
is symmetrizable.

\section*{Acknowledgements}
We thank the group of Jens Jorgensen, Robert Kohn, Jianfeng Lu, and Michael Weinstein for pointing out an error in section 5 of an earlier version of this paper and helping to clarify the distinction between CALR and weak CALR.


\begin{thebibliography}{99}

\bibitem{alu} A. Al\'u and N. Engheta, Achieving transparency with plasmonic and
		  metamaterial coatings, Phys. Rev. E 72 (2005), 0166623.

\bibitem{book2} {H. Ammari and H. Kang}, \textsl{Polarization and Moment
Tensors with Applications to Inverse Problems and Effective Medium
Theory}, Applied Mathematical Sciences, Vol. 162, Springer-Verlag,
New York, 2007.

\bibitem{enhance} H. Ammari, H. Kang, H. Lee, and M. Lim, Enhancement of near cloaking using generalized polarization
tensors vanishing structures. Part I: The conductivity problem, Comm. Math. Phys., to appear.

\bibitem{board} A.D. Boardman and K. Marinov, Nonradiating and radiating configurations driven by left-handed metamaterials,
J. Opt. Soc. Am. B 23 (2006), 543--552.

\bibitem{bouchitte} G. Bouchitt\'e and B. Schweizer, Cloaking of
small objects by anomalous localized resonance, Quart. J. Mech.
Appl. Math. 63 (2010), 437--463.


\bibitem{bruno} O.P. Bruno and S. Lintner, Superlens-cloaking of
small dielectric bodies in the quasi-static regime, J. Appl. Phys.
102 (2007), 124502.

\bibitem{BL} K. Bryan and T. Leise, Impedance Imaging, inverse problems, and
Harry Potter's Cloak, SIAM Rev. 52 (2010), 359--377.

\bibitem{CCKSM_07} W. Cai, U.K. Chettiar, A.V. Kildishev, V.M. Shalaev and G.W. Milton,
Non-magnetic cloak with minimized scattering, Appl. Phys. Lett. 91 (2007), 111105.

\bibitem{Folland76}
{G.B. Folland}, {\sl  Introduction to Partial Differential
Equations}, Princeton University Press, Princeton, NJ, 1976.

\bibitem{GK} I.C. Gohberg and M.G. Krein, {\sl Introduction to the Theory of Linear Nonself-adjoint Operators
in Hilbert Spaces}, Amer. Math. Soc. Translations, Vol. {18} ,
AMS, Providence, R.I, 1969.


\bibitem{GKLU} A. Greenleaf, Y. Kurylev, M. Lassas, and G. Uhlmann, Cloaking
devices, electromagnetic wormholes, and transformation optics,
SIAM Rev. 51 (2009), 3--33.

\bibitem{glu} A. Greenleaf, M. Lassas, and G. Uhlmann, On nonuniqueness for Calder\'on's inverse problem,
Math. Res. Lett. 10 (2003), 685--693.

\bibitem{GMO_09} F. Guevara Vasquez and G. W. Milton and D. Onofrei, Broadband exterior cloaking,
Opt. Express 17 (2009), 14800--14805.

\bibitem{Isa90} V. Isakov, {\sl  Inverse Source Problems}, Math.
Surveys and Monograph Series Vol. 34, AMS, Providence, RI, 1990.

\bibitem{KS96} H. Kang and J-K. Seo, Layer potential techniques for the inverse conductivity problems, Inverse Problems, 12 (1996), 267--278.

\bibitem{kellog} O.D. Kellogg, {\sl Foundations of Potential
Theory}, Dover, New York, 1953.

\bibitem{KPS} D. Khavinson, M. Putinar, and H.S. Shapiro,
Poincar\'{e}'s variational problem in potential theory, Arch.
Ration. Mech. Anal. 185 (2007),  143--184.


\bibitem{kohn2} R. V. Kohn, D. Onofrei, M. S. Vogelius, and M. I.
Weinstein, Cloaking via change of variables for the Helmholtz
equation, Comm. Pure Appl. Math. 63 (2010), 973--1016.


\bibitem{kohn1} R. V. Kohn, H. Shen, M. S. Vogelius, and M. I. Weinstein,
Cloaking via change of variables in electric impedance tomography,
Inverse Problems 24 (2008), 015016.

\bibitem{LCZC_09} Y. Lai, H. Chen, Z.-Q. Zhang, and C. T. Chan,
Complementary media invisibility cloak that cloaks objects at a distance outside the cloaking shell,
Phys. Rev. Lett. 102 (2009), 093901.

\bibitem{leonhardt} U. Leonhardt, Optical conforming mapping,
Science, 312 (2006), 5781, 1777--1780.


\bibitem{broaband} U. Leonhardt and T. Tyc, Broadband invisibility by
non-euclidean cloaking, Science 323 (2009), 110--111.

\bibitem{Lim_IJM_01} M. Lim, Symmetry of a boundary integral operator and a
characterization of balls, Illinois J. Math. 45 (2001), 537--543.

\bibitem{liu} H. Liu, Virtual reshaping and invisibility in
obstacle scattering, Inverse Problems 25 (2009), 044006.

\bibitem{MNBM_09} R.C. McPhedran, N.-A.P. Nicorovici, L.C. Botten, and G.W. Milton,
Cloaking by plasmonic resonance among systems of particles: cooperation or combat?
C.R. Phys. 10 (2009), 391--399.

\bibitem{miller} D.A.B. Miller, On perfect cloaking, Opt. Express 14 (2006) 12457--12466.

\bibitem{MN_PRSA_06} G.W. Milton and N.-A.P. Nicorovici, On the cloaking
effects associated with anomalous localized resonance, Proc. R.
Soc. A 462 (2006), 3027--3059.


\bibitem{MNM_07} G.W. Milton, N.-A.P. Nicorovici, and R.C. McPhedran,
Opaque perfect lenses, Physica B 394 (2007), 171--175.

\bibitem{MNMCJ_09} G.W. Milton, N.-A.P. Nicorovici, R.C. McPhedran,
K. Cherednichenko, and Z. Jacob, Solutions in folded geometries,
and associated cloaking due to anomalous resonance,
New. J. Phys. 10 (2008), 115021.

\bibitem{MNMP_PRSA_05} G.W. Milton, N.-A.P. Nicorovici, R.C. McPhedran,
and V.A. Podolskiy, A proof of superlensing in the quasistatic
regime, and limitations of superlenses in this regime due to
anomalous localized resonance, Proc. R. Soc. A 461 (2005),
3999--4034.

\bibitem{NMB_11} N.-A.P. Nicorovici, R.C. McPhedran, and L.C. Botten,
Relative local density of states and cloaking in finite clusters of coated cylinders,
Wave. Random Complex 21 (2011) 248--277.

\bibitem{NMET_08} N.-A.P. Nicorovici, R.C. McPhedran, S. Enoch, and G. Tayeb,
Finite wavelength cloaking by plasmonic resonance, New. J. Phys. 10 (2008), 115020.

\bibitem{NMM_94} N.-A.P. Nicorovici, R.C. McPhedran, and G.W. Milton,
Optical and dielectric properties of partially resonant composites,
Phys. Rev. B 49 (1994), 8479--8482.

\bibitem{osa}  N.-A.P. Nicorovici, G.W. Milton, R.C. McPhedran,
and L.C. Botten, Quasistatic cloaking of two-dimensional
polarizable discrete systems by anomalous resonance, Optics
Express 15 (2007), 6314--6323.

\bibitem{pendry} J. B. Pendry, D. Schurig, and D. R. Smith, Controlling electromagnetic fields, Science, 312
(2006), 1780--1782.

\bibitem{Ru} W. Rudin, {\sl Functional Analysis},  McGraw-Hill Series in Higher Mathematics,
New York-D\"{u}sseldorf-Johannesburg, 1973.








\end{thebibliography}
\end{document}